\DeclareMathOperator{\Tr}{Tr}
\DeclareMathOperator{\N}{N}
\title{A characterization of linearized polynomials with maximum kernel}
\author{Bence Csajb\'ok, Giuseppe Marino, Olga Polverino, Ferdinando Zullo\thanks{{The
research  was supported by
Ministry for Education, University and Research of Italy MIUR (Project
PRIN 2012 "Geometrie di Galois e strutture di incidenza") and by the Italian National
Group for Algebraic and Geometric Structures and their Applications (GNSAGA
- INdAM). The first author was supported by the J\'anos Bolyai
Research Scholarship of the Hungarian Academy of Sciences and by OTKA
Grant No. K 124950. }}}
\date{}
\newcommand{\cL}{{\mathcal L}}
\newcommand{\Lb}{{\mathbb L}}
\newcommand{\cS}{{\mathcal S}}
\newcommand{\F}{{\mathbb F}}
\newcommand{\K}{{\mathbb K}}
\newcommand{\la}{\langle}
\newcommand{\ra}{\rangle}
\newtheorem{theorem}{Theorem}[section]
\newtheorem{lemma}[theorem]{Lemma}
\newtheorem{corollary}[theorem]{Corollary}
\newtheorem{proposition}[theorem]{Proposition}
\newtheorem{result}[theorem]{Result}
\newtheorem{remark}[theorem]{Remark}
\begin{document}
\maketitle

\begin{abstract}
We provide sufficient and necessary conditions for the coefficients of a $q$-polynomial $f$ over $\F_{q^n}$ which ensure that the number of distinct roots of $f$ in $\F_{q^n}$ equals the degree of $f$. We say that these polynomials have maximum kernel. As an application we study in detail $q$-polynomials of degree $q^{n-2}$ over $\F_{q^n}$ which have maximum kernel and for $n\leq 6$ we list all $q$-polynomials with maximum kernel. We also obtain information on the splitting field of an arbitrary $q$-polynomial. Analogous results are proved for $q^s$-polynomials as well, where $\gcd(s,n)=1$.
\end{abstract}

\bigskip
{\it AMS subject classification:} 11T06, 15A04

\bigskip
{\it Keywords:} Linearized polynomials, linear transformations, semilinear transformations

\section{Introduction}

A $q$-polynomial over $\F_{q^n}$ is a polynomial of the form $f(x)=\sum_i a_i x^{q^i}$, where $a_i\in \F_{q^n}$. We will denote the set of these polynomials by $\cL_{n,q}$.
Let $\K$ denote the algebraic closure of $\F_{q^n}$. Then for every $\F_{q^n} \leq \Lb \leq \K$, $f$ defines an
$\F_q$-linear transformation of $\Lb$, when $\Lb$ is viewed as an $\F_q$-vector space. If $\Lb$ is a finite field of size $q^m$ then the polynomials of
$\cL_{n,q}$ considered modulo $(x^{q^m}-x)$ form an $\F_q$-subalgebra of the $\F_q$-linear transformations of $\Lb$. Once this field $\Lb$ is fixed, we can define the \emph{kernel} of $f$ as the kernel of the corresponding $\F_q$-linear transformation of $\Lb$, which is the same as the set of roots of $f$ in $\Lb$; and the \emph{rank} of $f$ as the rank of the corresponding $\F_q$-linear transformation of $\Lb$. Note that the kernel and the rank of $f$ depend on this field $\Lb$ and from now on we will consider the case $\Lb=\F_{q^n}$.
In this case $\cL_{n,q}$ considered modulo $(x^{q^n}-x)$ is isomorphic to the $\F_q$-algebra of $\F_q$-linear transformations of the $n$-dimensional $\F_q$-vector space $\F_{q^n}$. The elements of this factor algebra are represented by $\tilde \cL_{n,q}:=\{\sum_{i=0}^{n-1}a_i x^{q^i} \colon a_i\in \F_{q^n}\}$. For $f\in \tilde \cL_{n,q}$ if $\deg f = q^k$ then we call $k$ the \emph{$q$-degree} of $f$. It is clear that in this case the kernel of $f$ has dimension at most $k$ and the rank of $f$ is at least $n-k$.

Let $U=\la u_1,u_2,\ldots,u_k\ra_{\F_q}$ be a $k$-dimensional $\F_q$-subspace of $\F_{q^n}$.
It is well known that, up to a scalar factor, there is a unique $q$-polynomial of $q$-degree $k$, which has kernel $U$.
We can get such a polynomial as the determinant of the matrix
\[
\begin{pmatrix}
x & x^q & \cdots & x^{q^k} \\
u_1 & u_1^q & \cdots & u_1^{q^k} \\
\vdots \\
u_k & u_k^q & \cdots & u_k^{q^k}
\end{pmatrix}.
\]

The aim of this paper is to study the other direction, i.e. when a given $f\in \tilde \cL_{n,q}$ with $q$-degree $k$ has kernel of dimension $k$.
If this happens then we say that $f$ is a $q$-polynomial with \emph{maximum kernel}.

\smallskip

If $f(x) \equiv a_0x+a_1x^\sigma+\cdots+a_k x^{\sigma^k} \pmod {x^{q^n}-x}$, with $\sigma=q^s$ for some $s$ with $\gcd(s,n)=1$, then we say that $f(x)$ is a $\sigma$-polynomial (or $q^s$-polynomial) with $\sigma$-degree (or $q^s$-degree) $k$. Regarding $\sigma$-polynomials the following is known.

\begin{result}\cite[Theorem 5]{GQ2009}
\label{Gow}
Let $\mathbb{L}$ be a cyclic extension of a field $\F$ of degree $n$, and suppose that $\sigma$ generates the Galois group of $\mathbb{L}$ over $\F$. Let $k$ be an integer satisfying $1 \leq k \leq n$, and let $a_0,a_1,\ldots,a_k$ be elements of $\mathbb{L}$, not all them are zero. Then the $\F$-linear transformation defined as
\[ f(x)=a_0x+a_1x^\sigma+\cdots+a_{k}x^{\sigma^{k}} \]
has kernel with dimension at most $k$ in $\Lb$.
\end{result}

Similarly to the $s=1$ case we will say that a $\sigma$-polynomial is of \emph{maximum kernel} if the dimension of its kernel equals its $\sigma$-degree.

\medskip

Linearized polynomials have been used to describe families of $\F_q$-linear \emph{maximum rank distance codes} (\emph{MRD-codes}), i.e. $\F_q$-subspaces of $\tilde{\mathcal{L}}_{n,q}$ of order $q^{nk}$ in which each element has kernel of dimension at most $k$.
The first examples of MRD-codes found were the \emph{generalized Gabidulin codes} \cite{Delsarte,Gabidulin},
that is $\mathcal{G}_{k,s}=\langle x,x^{q^s},\ldots,x^{q^{s(k-1)}} \rangle_{\F_{q^n}}$ with $\gcd(s,n)=1$; the fact that $\mathcal{G}_{k,s}$ is an MRD-code can be shown simply by using Result \ref{Gow}.
It is important to have explicit conditions on the coefficients of a linearized polynomial characterizing the number of its roots.
Further connections with projective polynomials can be found in \cite{McGS}.

\medskip

Our main result provides sufficient and necessary conditions on the coefficients of a $\sigma$-polynomial with maximum kernel.

\begin{theorem}
\label{main}
Consider
\[f(x)=a_0x+a_1x^{\sigma}+\cdots+a_{k-1}x^{\sigma^{k-1}}-x^{\sigma^k},\]
with $\sigma=q^s$, $\gcd(s,n)=1$ and $a_0,\ldots,a_{k-1} \in \F_{q^n}$. Then $f(x)$ is of maximum kernel if and only if the matrix
	\begin{equation}\label{matrix} A=\left(
	\begin{array}{cccccccccccc}
	0 & 0 & \cdots & 0 & a_0 \\
	1 & 0 & \cdots & 0 & a_1 \\
	0 & 1 & \cdots & 0 & a_2 \\
	\vdots & \vdots & & \vdots & \vdots \\
	0 & 0 & \cdots & 1 & a_{k-1}
	\end{array}
	\right) \end{equation}
	satisfies
	\[A A^{\sigma} \cdots A^{\sigma^{n-1}}=I_k,\]
where $A^{\sigma^i}$ is the matrix obtained from $A$ by applying to each of its entries the automorphism $x\mapsto x^{\sigma^i}$ and $I_k$ is the identity matrix of order $k$.
\end{theorem}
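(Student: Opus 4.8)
The plan is to realise the roots of $f$ in $\F_{q^n}$ as the fixed vectors of a semilinear transformation of $\F_{q^n}^k$ built from the companion matrix $A$, so that the matrix identity in the statement becomes the assertion that the $n$-th power of that transformation is the identity.

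\emph{Overview and reduction.} For a column vector $v=(v_1,\dots,v_k)^\top\in\F_{q^n}^k$, write $v^{\sigma^i}$ for the vector obtained by raising every entry to the power $\sigma^i$, and inspect the equation $A^\top v=v^{\sigma}$. Since the $i$-th row of $A^\top$ is $e_{i+1}^\top$ for $i<k$ and its last row is $(a_0,\dots,a_{k-1})$, this equation forces $v_{i+1}=v_i^{\sigma}$, hence $v_i=v_1^{\sigma^{i-1}}$, and in its last coordinate it then reads $\sum_{i=0}^{k-1}a_i v_1^{\sigma^i}=v_1^{\sigma^k}$, i.e.\ $f(v_1)=0$; conversely $v=(x,x^{\sigma},\dots,x^{\sigma^{k-1}})^\top$ solves $A^\top v=v^{\sigma}$ whenever $f(x)=0$. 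Thus $x\mapsto(x,x^{\sigma},\dots,x^{\sigma^{k-1}})^\top$ is an $\F_q$-linear isomorphism from $\ker f$ onto $V_0:=\{v\in\F_{q^n}^k:\ A^\top v=v^{\sigma}\}$, and it suffices to prove that $\dim_{\F_q}V_0=k$ if and only if $M:=AA^{\sigma}\cdots A^{\sigma^{n-1}}=I_k$.

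\emph{From maximum kernel to $M=I_k$.} Applying $\sigma$ repeatedly to $A^\top v=v^{\sigma}$ and using that $\sigma$ generates $\mathrm{Gal}(\F_{q^n}/\F_q)$ (so $\sigma^n=\mathrm{id}$ on $\F_{q^n}$), every $v\in V_0$ satisfies $v=v^{\sigma^n}=(A^{\sigma^{n-1}})^\top\cdots(A^{\sigma})^\top A^\top v=M^\top v$. If $\dim_{\F_q}V_0=k$, take an $\F_q$-basis $x_1,\dots,x_k$ of $\ker f$; the vectors $(x_j,x_j^{\sigma},\dots,x_j^{\sigma^{k-1}})^\top$ are the columns of the generalized Moore matrix $(x_j^{\sigma^{i-1}})_{i,j}$, which is nonsingular: were it singular, a nonzero $\sigma$-polynomial of $\sigma$-degree $\le k-1$ would vanish on all $q^k$ elements of $\ker f$, contradicting Result~\ref{Gow}. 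So these $k$ vectors form an $\F_{q^n}$-basis fixed by $M^\top$, forcing $M^\top=I_k$ and hence $M=I_k$.

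\emph{From $M=I_k$ to maximum kernel, and the crux.} Rewriting $A^\top v=v^{\sigma}$ as $v=Bv^{\sigma^{-1}}$ with $B:=(A^{\sigma^{-1}})^\top$, we have $V_0=\mathrm{Fix}(\Theta)$ for the $\sigma^{-1}$-semilinear map $\Theta(v):=Bv^{\sigma^{-1}}$ of $\F_{q^n}^k$, and iterating gives $\Theta^n(v)=(BB^{\sigma^{-1}}\cdots B^{\sigma^{-(n-1)}})v$, whose matrix, after transposing the product, is again $M^\top$. Thus $M=I_k$ means $\Theta^n=\mathrm{id}$, so $\Theta$ is a bijective $\sigma^{-1}$-semilinear transformation of order dividing $n$; by the classical descent theorem for semilinear transformations over a cyclic Galois extension — equivalently the matrix form of Hilbert's Theorem~90, which from $BB^{\sigma^{-1}}\cdots B^{\sigma^{-(n-1)}}=I_k$ produces $C\in\GL_k(\F_{q^n})$ with $B=C(C^{\sigma^{-1}})^{-1}$, whence $\mathrm{Fix}(\Theta)=C\cdot\F_q^k$ — the $\F_q$-space $V_0$ has dimension $k=\dim_{\F_{q^n}}\F_{q^n}^k$, so $\dim_{\F_q}\ker f=k$. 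This last descent step is the only non-formal point: the reverse bound $\dim_{\F_q}\mathrm{Fix}(\Theta)\le k$ is routine (a minimal $\F_{q^n}$-linear dependence among fixed vectors, hit by $\Theta$, yields a shorter one with coefficients in $\F_q$), and the degenerate case $a_0=0$ is harmless, since then $A$ and $M$ are singular while $f=\tilde f(x^{\sigma})$ with $\tilde f$ of strictly smaller $\sigma$-degree, so neither side of the equivalence can hold.
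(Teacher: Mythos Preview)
Your proof is correct and shares the paper's essential strategy: both directions pass through a $\sigma^{\pm 1}$-semilinear transformation of $\F_{q^n}^k$ whose $n$-th iterate is governed by $M=AA^\sigma\cdots A^{\sigma^{n-1}}$, and the converse rests on the descent theorem (matrix Hilbert~90 / Lang's theorem), which is exactly what the paper isolates as Theorem~\ref{fixsemi}.

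The packaging differs in a way worth noting. You set up the explicit $\F_q$-isomorphism $\ker f\cong V_0=\{v:A^\top v=v^\sigma\}$ at the outset and work with the $\sigma^{-1}$-semilinear map $\Theta$ on the transpose side; this makes the backward direction immediate once $\dim_{\F_q}\mathrm{Fix}(\Theta)=k$ is known. The paper instead uses the $\sigma$-semilinear map $\tau(v)=Av^{\sigma}$, applies Theorem~\ref{fixsemi} to get a spanning fixed $\F_q$-space, and then performs an explicit change-of-basis to locate a vector $\mathbf{u}=\phi(\mathbf{e}_0)$ whose coordinates are $\F_q$-independent roots of $f$. Your route avoids this extraction step, while the paper's route makes the passage from fixed vectors to actual roots more visible. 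In the forward direction you use the Moore-matrix nonsingularity (via Result~\ref{Gow}) to show the fixed vectors span, whereas the paper phrases the same fact as the statement that $(\mathbf{u},\mathbf{u}^\sigma,\dots,\mathbf{u}^{\sigma^{k-1}})$ is an $\F_{q^n}$-basis; these are the same observation in transpose. Your handling of the degenerate case $a_0=0$ is a small addition the paper leaves implicit.
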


An immediate consequence of this result gives information on the splitting field of an arbitrary $\sigma$-polynomial, cf. Theorem \ref{appl}.

\medskip

In Section \ref{k=n-2} we study in details the $\sigma$-polynomials of $\sigma$-degree $n-2$ for each $n$. For $n\leq 6$ we also provide a list of all $\sigma$-polynomials with maximum kernel cf. Sections \ref{n=4}, \ref{n=5} and \ref{n=6}. These results might yield further classification results and examples of $\F_q$-linear MRD-codes.

\section{Preliminary Results}

In this section we recall some results of Dempwolff, Fisher and Herman from \cite{DFH}, adapting them to our needs in order to make this paper self-contained.

\smallskip
Let $V$ be a $k$-dimensional vector space over the field $\mathbb{F}$ and let $T$ be a semilinear transformation of $V$. A $T$-cyclic subspace of $V$ is an $\mathbb{F}$-subspace of $V$ spanned by $\{\mathbf{v},T(\mathbf{v}),\ldots\}$ over $\mathbb{F}$ for some $\mathbf{v}\in V$, which will be denoted by $[\mathbf{v}]$. We first recall the following lemma.

\begin{lemma}\cite[Theorem 1]{DFH}\label{cyclicsubspaces}
Let $V$ be an $n$-dimensional vector space over the field $\mathbb{F}$, $\sigma$ an automorphism of $\mathbb{F}$ and $T$ an invertible $\sigma$-semilinear transformation on $V$.
Then
\[ V=[\mathbf{u}_1]\oplus \ldots \oplus [\mathbf{u}_r] \]
for $T$-cyclic subspaces satisfying $\dim[\mathbf{u}_1]\geq \dim[\mathbf{u}_2]\geq \ldots \geq \dim [\mathbf{u}_r]\geq 1$.
\end{lemma}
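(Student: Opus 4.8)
The plan is to recast the statement as an instance of the structure theorem for finitely generated modules over a principal ideal domain, the point being that an invertible $\sigma$-semilinear map is exactly the data of a module over a twisted polynomial ring. First I would form the skew polynomial ring $R=\mathbb{F}[t;\sigma]$, whose elements are the formal sums $\sum_i c_i t^i$ with $c_i\in\mathbb{F}$, multiplied subject to the commutation rule $tc=\sigma(c)t$ for every $c\in\mathbb{F}$. Because $\mathbb{F}$ is a field and $\sigma$ is an automorphism, $R$ admits a left division algorithm: to divide $f$ by a nonzero $g=bt^m+\cdots$ one cancels leading terms by left-multiplying $g$ with monomials $ct^{j}$, which requires solving $c\,\sigma^{j}(b)=a$ for the leading coefficient $a$ of $f$, always possible since $\sigma^{j}(b)\neq 0$. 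It follows that every left ideal of $R$ is principal, so $R$ is a (noncommutative) principal ideal domain.

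Next I would make $V$ a left $R$-module by setting $t\cdot\mathbf{v}:=T(\mathbf{v})$ and extending through the $\mathbb{F}$-action. The semilinearity identity $T(c\mathbf{v})=\sigma(c)T(\mathbf{v})$ is precisely what makes this action compatible with the rule $tc=\sigma(c)t$, so the module is well defined. It is finitely generated, since any $\mathbb{F}$-basis of $V$ generates it over $R$, and it is torsion: for each $\mathbf{v}$ the $n+1$ vectors $\mathbf{v},T\mathbf{v},\ldots,T^{n}\mathbf{v}$ are $\mathbb{F}$-linearly dependent, which yields a nonzero element of $R$ annihilating $\mathbf{v}$. Invertibility of $T$ says that $t$ acts injectively on $V$; I would use this to guarantee that the restriction of $T$ to any $T$-invariant subspace stays invertible, which keeps the hypotheses intact under passage to submodules.

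I would then apply the structure theorem for finitely generated torsion modules over a principal ideal domain, in its form valid for noncommutative principal ideal domains (Jacobson), to obtain $V\cong R/Re_1\oplus\cdots\oplus R/Re_r$ for nonzero non-units $e_1,\ldots,e_r\in R$. Each summand is cyclic over $R$, generated by some $\mathbf{u}_i$, and since $R\cdot\mathbf{u}_i=\langle \mathbf{u}_i,T\mathbf{u}_i,T^{2}\mathbf{u}_i,\ldots\rangle_{\mathbb{F}}=[\mathbf{u}_i]$, the summands are exactly $T$-cyclic subspaces; the left division algorithm identifies $\{1,t,\ldots,t^{\deg e_i-1}\}$ as an $\mathbb{F}$-basis of $R/Re_i$, so $\dim_{\mathbb{F}}[\mathbf{u}_i]=\deg e_i$. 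Relabelling so that these degrees are non-increasing gives the asserted chain $\dim[\mathbf{u}_1]\geq\cdots\geq\dim[\mathbf{u}_r]\geq 1$.

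The main obstacle is the noncommutative structure theorem: over $R=\mathbb{F}[t;\sigma]$ multiplication is twisted and the clean divisibility of invariant factors must be replaced by the coarser relation of similarity of skew polynomials, so one cannot simply transcribe the commutative proof. If a self-contained argument is wanted in place of citing the noncommutative theory, I would instead induct on $\dim V$: pick $\mathbf{u}_1$ maximizing $\dim[\mathbf{u}_1]$, produce a $T$-invariant complement $W$ with $V=[\mathbf{u}_1]\oplus W$ by a conductor argument, noting that the set $\{f\in R:f\cdot\mathbf{v}\in S\}$ is a principal left ideal for any $T$-invariant $S$ and so supplies the needed $T$-conductor polynomials, and then apply the inductive hypothesis to the invertible map $T|_W$, all of whose cyclic summands have dimension at most $\dim[\mathbf{u}_1]$ by maximality. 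Constructing this invariant complement is the delicate step, exactly as in the classical cyclic decomposition theorem.
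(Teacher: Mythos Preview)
The paper does not actually prove this lemma: it is quoted verbatim from \cite[Theorem~1]{DFH} and used as a black box in the proof of Theorem~\ref{fixsemi}. So there is no ``paper's own proof'' to compare against.

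Your approach is correct. Turning $V$ into a left module over the skew polynomial ring $R=\mathbb{F}[t;\sigma]$ via $t\cdot\mathbf{v}=T(\mathbf{v})$ is exactly the right translation, and since $\sigma$ is an automorphism $R$ is a left principal ideal domain by the left division algorithm you describe. The structure theorem for finitely generated torsion modules over such rings (Jacobson) then yields the cyclic decomposition immediately. Your alternative inductive argument---choose $\mathbf{u}_1$ with $[\mathbf{u}_1]$ of maximal dimension, split off a $T$-invariant complement via conductor ideals, and recurse---is essentially the classical cyclic decomposition proof transported to the semilinear setting, and is closer in spirit to how results of this type are usually proved without invoking the full noncommutative module theory. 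Either route works; the module-theoretic version is cleaner but relies on a heavier external theorem, while the inductive version is self-contained but requires care in producing the invariant complement, as you note.

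One minor remark: invertibility of $T$ is not actually needed for the decomposition itself (the $R$-module structure and the torsion argument go through regardless), so your use of it is harmless but not essential here. The hypothesis matters later in the paper, where one needs $T$ restricted to each cyclic piece to remain invertible.
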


\begin{theorem}\label{fixsemi}
Let $T$ be an invertible semilinear transformation of $V=V(k,q^n)$ of order $n$, with companion automorphism $\sigma\in \mathrm{Aut}(\F_{q^n})$ such that $\mathrm{Fix}(\sigma)=\F_q$.
Then $\mathrm{Fix}(T)$ is a $k$-dimensional $\F_q$-subspace of $V$ and $\la \mathrm{Fix}(T) \ra_{\F_{q^n}}=V$.
\end{theorem}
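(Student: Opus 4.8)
The plan is to reduce everything to the linear independence of distinct field automorphisms, by means of a semilinear analogue of the Hilbert~90 resolvent. Note first that the hypotheses force $\sigma$ to have order exactly $n$: applying $T^n=\mathrm{id}$ to $\lambda v$ yields $\lambda=\sigma^n(\lambda)$ for all $\lambda\in\F_{q^n}$, so $\sigma^n=\mathrm{id}$, while $\mathrm{Fix}(\sigma)=\F_q$ forces the order to be $n$. For $\theta\in\F_{q^n}$ and $v\in V$ I would then set
\[
w_\theta(v):=\sum_{i=0}^{n-1}\theta^{\sigma^i}\,T^i(v).
\]
This is additive in $v$ and $\F_q$-linear in $v$, and a direct reindexing using $T^n=\mathrm{id}$ and $\sigma^n=\mathrm{id}$ shows $T\bigl(w_\theta(v)\bigr)=\sum_{j=1}^{n}\theta^{\sigma^j}T^j(v)=w_\theta(v)$, i.e. $w_\theta(v)\in\mathrm{Fix}(T)$; moreover, on $\mathrm{Fix}(T)$ itself the operator $w_\theta$ acts as multiplication by $\tr_{\F_{q^n}/\F_q}(\theta)$.

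The key step is to show that $\la w_\theta(v)\colon\theta\in\F_{q^n}\ra_{\F_{q^n}}$ equals the $T$-cyclic subspace $[v]=\la v,T(v),\dots,T^{n-1}(v)\ra_{\F_{q^n}}$ (the spanning set is finite because $T^n=\mathrm{id}$). If this span were a proper subspace of $[v]$, some nonzero $\F_{q^n}$-linear functional $\ell$ on $[v]$ would kill every $w_\theta(v)$, i.e. $\sum_{i=0}^{n-1}\ell\bigl(T^i(v)\bigr)\theta^{\sigma^i}=0$ for all $\theta\in\F_{q^n}$; since $\sigma^0,\dots,\sigma^{n-1}$ are $n$ distinct automorphisms of $\F_{q^n}$ they are linearly independent over $\F_{q^n}$ (this also follows from Result~\ref{Gow} applied to the $\sigma$-polynomial $\sum_i \ell(T^i(v))x^{\sigma^i}$), hence $\ell(T^i(v))=0$ for all $i$, forcing $\ell=0$ on $[v]$, a contradiction. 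Thus $[v]\subseteq\la\mathrm{Fix}(T)\ra_{\F_{q^n}}$ for every $v\in V$, and letting $v$ run over an $\F_{q^n}$-basis of $V$ gives $\la\mathrm{Fix}(T)\ra_{\F_{q^n}}=V$. (Alternatively one could first invoke Lemma~\ref{cyclicsubspaces} to reduce to $V$ being $T$-cyclic and run the same computation there, but the argument above does not need that reduction.)

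It remains to prove $\dim_{\F_q}\mathrm{Fix}(T)=k$. The spanning statement already gives $\dim_{\F_q}\mathrm{Fix}(T)\ge k$, since an $\F_q$-basis of $\mathrm{Fix}(T)$ is an $\F_{q^n}$-spanning set of $V$. For the reverse inequality I would check that $\F_q$-linearly independent elements of $\mathrm{Fix}(T)$ stay $\F_{q^n}$-linearly independent: given a shortest nontrivial relation $\sum_i\lambda_i v_i=0$ with $v_i\in\mathrm{Fix}(T)$, normalized so that $\lambda_1=1$, applying $T$ and subtracting yields $\sum_i(\lambda_i-\lambda_i^\sigma)v_i=0$ with vanishing first coefficient, so minimality forces every $\lambda_i\in\mathrm{Fix}(\sigma)=\F_q$, contradicting $\F_q$-independence. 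Hence $\dim_{\F_q}\mathrm{Fix}(T)\le\dim_{\F_{q^n}}V=k$, and equality holds, which also shows any $\F_q$-basis of $\mathrm{Fix}(T)$ is an $\F_{q^n}$-basis of $V$.

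I expect the only delicate points to be the reindexing that verifies $w_\theta(v)\in\mathrm{Fix}(T)$ (where $T^n=\mathrm{id}$ and $\sigma^n=\mathrm{id}$ must be used together to fold the $j=n$ term back to $j=0$) and the clean invocation of linear independence of characters in the spanning step; the remaining parts are routine bookkeeping.
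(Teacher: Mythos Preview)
Your proof is correct and takes a genuinely different route from the paper's. The paper first reduces, via Lemma~\ref{cyclicsubspaces} and a power of $T$, to the case where the companion automorphism is $x\mapsto x^q$ and $V$ is $T$-cyclic; it then writes down the companion matrix $A$, passes to the algebraic closure $\K$ to count $q^k$ fixed vectors as roots of an explicit $q$-polynomial, and finally uses a change-of-basis matrix $P$ with $P^q=PA$ to argue $P^{q^n}=P$ and so pull the fixed points back into $\F_{q^n}^k$. Your resolvent $w_\theta(v)=\sum_i\theta^{\sigma^i}T^i(v)$ together with linear independence of characters bypasses all of this: no cyclic decomposition, no passage to $\K$, no explicit matrices, and the argument works uniformly for any $\sigma$ with $\mathrm{Fix}(\sigma)=\F_q$. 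The shortest-relation descent for the upper bound on $\dim_{\F_q}\mathrm{Fix}(T)$ is likewise cleaner than the paper's counting. What the paper's approach buys is precisely the by-products it needs elsewhere: the identity $AA^q\cdots A^{q^{n-1}}=I_k$ appears in the course of its proof, and the existence of $P\in\GL(k,\F_{q^n})$ with $PA=P^q$ (a refinement of Lang's theorem) is noted immediately afterwards. For the bare statement of Theorem~\ref{fixsemi}, however, your argument is both shorter and more conceptual.
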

\begin{proof}
First assume that the companion automorphism of $T$ is $x \mapsto x^q$ and that there exists $\mathbf{v} \in V$ such that
\[V=\langle \mathbf{v}, T(\mathbf{v}),\ldots,T^{k-1}(\mathbf{v}) \rangle_{\F_{q^n}}.\]

\noindent Following the proof of \cite[Main Theorem]{DFH}, consider the ordered basis $\mathcal{B}_T=(\mathbf{v},T(\mathbf{v}),\ldots,T^{k-1}(\mathbf{v}))$ and let $A$ be the matrix associated with $T$ with respect to the basis $\mathcal{B}_T$, i.e.
\begin{equation}\label{matrix2} A=\left(
	\begin{array}{cccccccccccc}
	0 & 0 & \cdots & 0 & \alpha_0 \\
	1 & 0 & \cdots & 0 & \alpha_1 \\
	0 & 1 & \cdots & 0 & \alpha_2 \\
	\vdots & \vdots & & \vdots & \vdots \\
	0 & 0 & \cdots & 1 & \alpha_{k-1}
	\end{array}
	\right)\in \F_{q^n}^{k\times k}, \end{equation}
where $T^k(\mathbf{v})=\sum_{i=1}^k \alpha_{i-1} T^i(\mathbf{v})$ with $\alpha_0,\ldots,\alpha_{k-1} \in \F_{q^n}$ and, since $T$ is invertible, we have $\alpha_0\neq 0$.
Denote by $\overline{T}$ the semilinear transformation of $\F_{q^n}^k$ having $A$ as the associated matrix with respect to the canonical ordered basis $\mathcal{B}_{C}=(\mathbf{e}_1,\ldots,\mathbf{e}_k)$ of $\F_{q^n}^k$ and companion automorphism $x \mapsto x^q$.
Note that $c_{\mathcal{B}_T}(\mathrm{Fix}(T))=\mathrm{Fix}(\overline{T})$, where $c_{\mathcal{B}_T}$ is the coordinatization with respect to the basis $\mathcal{B}_T$.
Also, since $T$ has order $n$, we have
\begin{equation}\label{AA^q}
A A^{q} \cdots A^{q^{n-1}}=I_k,
\end{equation}
where $A^{q^i}$, for $i \in \{1,\ldots,n-1\}$, is the matrix obtained from $A$ by applying to each of its entries the automorphism $x\mapsto x^{q^i}$.
A vector $\mathbf{z}=(z_0,\ldots,z_{k-1})\in \F_{q^n}^k$ is fixed by $\overline{T}$ if and only if
\[ \left\{ \begin{array}{llll} \alpha_0 z_{k-1}^q=z_0\\ z_0^q+\alpha_1z_{k-1}^q=z_1\\ \vdots \\ z_{k-2}^q+\alpha_{k-1}z_{k-1}^q=z_{k-1} \end{array}\right. \]
Eliminating $z_0,\ldots,z_{k-2}$, we obatin the equation
\[ \alpha_0^{q^{k-1}}z_{k-1}^{q^k}+\alpha_1^{q^{k-2}}z_{k-1}^{q^{k-1}}+\ldots+\alpha_{k-1}z_{k-1}^q-z_{k-1}=0, \]
which has $q^k$ distinct solutions in the algebraic closure $\mathbb{K}$ of $\F_{q^n}$ by the derivative test.
Each solution determines a unique vector of $\mathrm{Fix}(\overline{T})$ in $\mathbb{K}^k$.
Also, the set $\mathrm{Fix}(\overline{T})$ is an $\F_q$-subspace of $\mathbb{K}^k$ and hence $\dim_{\F_q} \mathrm{Fix}(\overline{T})=k$.
Let $\{\mathbf{w}_1,\ldots,\mathbf{w}_k\}$ be an $\F_q$-basis of $\mathrm{Fix}(\overline{T})$ and note that since $|\mathrm{Fix}(\overline{T})|=q^k$, a vector $\displaystyle \sum_{i=1}^k a_i \mathbf{w}_i$ is fixed by $\overline{T}$ if and only if $a_i \in \F_q$. This implies that $\mathbf{w}_1,\ldots,\mathbf{w}_k$ are also $\mathbb{K}$-independent.
Thus $\langle \mathrm{Fix}(\overline{T}) \rangle_{\mathbb{K}}=\mathbb{K}^k$ and $\{\mathbf{w}_1,\ldots,\mathbf{w}_k\}$ is also a $\mathbb{K}$-basis of $\mathbb{K}^k$.
Denote by $\phi$ the $\mathbb{K}$-linear transformation such that $\phi(\mathbf{w}_i)=\mathbf{e}_i$ and by $P$ the associated matrix with respect to the canonical basis $\mathcal{B}_C$, so $P\in\mathrm{GL}(k,\mathbb{K})$.
The semilinear transformation $\phi \circ \overline{T} \circ \phi^{-1}$ has companion automorphism $x \mapsto x^q$, order $n$ and associated matrix with respect to the canonical basis $P\cdot A\cdot P^{-q}$, where $P^{-q}$ is the inverse of $P$ in which the automorphism $x\mapsto x^q$ is applied entrywise.
Note that $\phi \circ \overline{T} \circ \phi^{-1}(\mathbf{e}_i)=\phi(\overline{T}(\mathbf{w}_i))=\phi(\mathbf{w}_i)=\mathbf{e}_i$, hence
\begin{equation}\label{4}
P\cdot A\cdot P^{-q}=I_k,
\end{equation}
i.e.
\begin{equation}\label{PAP-1}
P^{q}=P\cdot A.
\end{equation}
By Equations \eqref{AA^q} and \eqref{PAP-1} and using induction we get
\[ P^{q^n}= P\cdot A \cdot A^q\cdot \ldots \cdot A^{q^{n-1}}=P,  \]
i.e. $P \in \F_{q^n}^{k\times k}$.
This implies that $\mathrm{Fix}(\overline{T})$ is an $\F_q$-subspace of $\F_{q^n}^k$ of dimension $k$ and hence $\mathrm{Fix}(T)=c_{\mathcal{B}_T}^{-1}(\mathrm{Fix}(\overline{T}))$ is a $k$-dimensional subspace of $V(k,q^n)$ with the property that $\la \mathrm{Fix}(T) \ra_{\F_{q^n}}=V$.

Consider now the general case, i.e. suppose $T$ as in the statement, that is $T$ is an invertible semilinear map of order $n$ with companion automorphism $x\mapsto x^{q^s}$ and $\gcd(s,n)=1$.
Since $\gcd(s,n)=1$ there exist $l,m \in \mathbb{N}$ such that $1=sl+mn$, and hence $\gcd(l,n)=1$.
Then the semilinear transformation $T^l$ has order $n$, companion automorphism $x \mapsto x^q$ and $\mathrm{Fix}(T)=\mathrm{Fix}(T^l)$.
By Lemma \ref{cyclicsubspaces}, we may write
\[ V=[\mathbf{u}_1]\oplus \ldots \oplus [\mathbf{u}_r], \]
where $[\mathbf{u}_i]$ is a $T^l$-cyclic subspace of $V$ of dimension $m_i\geq1$, for each $i \in \{1,\ldots,r\}$, and $\sum_{i=1}^r m_i=k$.
Then we can restrict $T^l$ to each subspace $[\mathbf{u}_i]$ and by applying the previous arguments we get that $U_i=\mathrm{Fix}(T^l|_{[\mathbf{u}_i]})$ is an $\F_q$-subspace of $[\mathbf{u}_i]$ of dimension $m_i$ with the property that
$\langle U_i \rangle_{\F_{q^n}}=[\mathbf{u}_i]$.
Thus
\[\mathrm{Fix}(T)=\mathrm{Fix}(T^l)=U_1\oplus\ldots\oplus U_r\]
is an $\F_q$-subspace of dimension $k$ of $V$ with the property that $\la \mathrm{Fix}(T) \ra_{\F_{q^n}}=V$.
\end{proof}

The existence of a matrix $P \in \mathrm{GL}(k,\mathbb{K})$, with $\mathbb{K}$ the algebraic closure of a finite field of order $q$, satisfying \eqref{4} is also a consequence of the celebrated Lang's Theorem \cite{Lang} on connected linear algebraic groups. More precisely, by Lang's Theorem, since $\mathrm{GL}(k,\mathbb{K})$ is a connected linear algebraic group, the map $M \in \mathrm{GL}(k,\mathbb{K})\mapsto M^{-1}\cdot M^{q} \in  \mathrm{GL}(k,\mathbb{K})$ is onto. In Theorem \ref{fixsemi} it is proved that, if the semilinear transformation of $V(k,q^n)$ having $A$ as associated matrix has order $n$, then $P \in \mathrm{GL}(k,\F_{q^n})$.

\begin{remark}
Let $T$ be an invertible semilinear transformation of $V=V(k,q^n)$ with companion automorphism $x\mapsto x^q$ and let $\mathbb{K}$ be the algebraic closure of $\F_{q^n}$.
Denote by $\overline{T}$ the semilinear transformation of $\mathbb{K}^k$ associated with $T$ as in the proof of Theorem \ref{fixsemi}.
If $\lambda \in \mathbb{K}$, then the set $E(\lambda):=\{\mathbf{v} \in \mathbb{K}^k \colon \overline{T}(\mathbf{v})=\lambda \mathbf{v}\}$ is an $\F_q$-subspace of $\mathbb{K}^k$.
By \cite[page 293]{DFH}, it follows that $E(\lambda)=\lambda^{\frac{1}{q-1}}\mathrm{Fix}(\overline{T})$ and by \cite[Main Theorem]{DFH} $E(\lambda)$ is a $k$-dimensional $\F_q$-subspace of $\mathbb{K}^k$.
Also, when $T$ has order $n$ and $\lambda^{\frac{1}{q-1}} \in \F_{q^n}$, by Theorem \ref{fixsemi}, $E(\lambda)$ is a $k$-dimensional $\F_q$-subspace contained in $\F_{q^n}^k$ such that $\langle E(\lambda)\rangle_{\F_{q^n}}=\F_{q^n}^k$.
\end{remark}

\section{Main Results}

Now we are able to prove our main result:

\medskip

\emph{Proof of Theorem \ref{main}.}
First suppose $\dim_{\F_q} \ker f=k$. Then there exist $u_0,u_1,\ldots,u_{k-1} \in \F_{q^n}$ which form an $\F_q$-basis of $\ker f$.\\
\noindent Put $\mathbf{u}:=(u_0,u_1,\ldots,u_{k-1})\in\F_{q^n}^k$. Since $u_0,u_1,\ldots,u_{k-1}$ are $\F_q$-linearly independent, by \cite[Lemma 3.51]{LN}, we get that ${\mathcal B}:=(\mathbf{u},\mathbf{u}^{q^s},\ldots,\mathbf{u}^{q^{s(k-1)}})$ is an ordered $\F_{q^n}$-basis of $\F_{q^n}^k$. Also, $\mathbf{u}^{q^{sk}}=a_0\mathbf{u}+a_1\mathbf{u}^{q^s}+\cdots+a_{k-1}\mathbf{u}^{q^{s(k-1)}}$. It can be seen that the matrix \eqref{matrix} represents the $\F_{q^n}$-linear part of the $\F_{q^n}$-semilinear map $\overline{\sigma}\colon {\mathbf v}\in\F_{q^n}^k\mapsto {\mathbf v}^{q^s}\in\F_{q^n}^k$ w.r.t. the basis $\mathcal B$. Since $\gcd (s,n)=1$, $\overline{\sigma}$ has order $n$ and hence the assertion follows.

\smallskip

Viceversa, let $\tau$ be defined as follows

\begin{equation}\label{law}
\tau\colon
\left(
\begin{array}{c}
x_0\\
x_1\\
\vdots\\
x_{k-1}
\end{array}
\right)
\in\F_{q^n}^k\mapsto A
\left(
\begin{array}{c}
x_0\\
x_1\\
\vdots\\
x_{k-1}
\end{array}
\right)^{q^s}\in\F_{q^n}^k,
\end{equation}
where $A$ is as in \eqref{matrix} with the property $A A^{q^s}\cdots A^{q^{s(n-1)}}=I_k$. Then $\tau$ has order $n$ and, by Theorem \ref{fixsemi}, it fixes a $k$-dimensional $\F_q$-subspace $\mathcal{S}$ of $\F_{q^n}^k$ with the property that $\langle \mathcal{S} \rangle_{\F_{q^n}}=\F_{q^n}^k$.

Let $\mathcal{B}_{\mathcal{S}}=(\mathbf{s}_0,\ldots,\mathbf{s}_{k-1})$ be an $\F_q$-basis of $\cS$ and note that, since $\langle \mathcal{S} \rangle_{\F_{q^n}}=\F_{q^n}^k$, $\mathcal{B}_{\cS}$ is also an $\F_{q^n}$-basis of $\F_{q^n}^k$, then denoting by $\mathcal{B}_C$ the canonical ordered basis of $\F_{q^n}^k$, there exists a unique isomorphism $\phi$ of $\F_{q^n}^k$ such that $\phi(\mathbf{s}_i)=\mathbf{e}_i$ for each $i \in \{1,\ldots,k\}$. Then $\overline{\sigma}=\phi \circ \tau \circ \phi^{-1}$, where $\overline{\sigma}\colon {\mathbf v}\in\F_{q^n}^k\mapsto {\mathbf v}^{q^s}\in\F_{q^n}^k$. Also,
\begin{equation}\label{pow}
\overline{\sigma}^i = \phi \circ \tau^i \circ \phi^{-1},
\end{equation}
for each $i \in \{1,\ldots,n-1\}$.
Also, by \eqref{law}
\[
\begin{array}{llcll}
\tau(\mathbf{e}_0)=\mathbf{e}_1, \\
\tau(\mathbf{e}_1)=\tau^2(\mathbf{e}_0)=\mathbf{e}_2, \\
\vdots \\
\tau(\mathbf{e}_{k-1})=\tau^k(\mathbf{e}_0)=(a_0,\ldots,a_{k-1})=a_0\mathbf{e}_0+\cdots+ a_{k-1}\mathbf{e}_{k-1}. \\
\end{array}
\]
So, we get that
\[ \tau^k(\mathbf{e}_0)=a_0 \mathbf{e}_0 + a_1 \tau(\mathbf{e}_0) + \cdots + a_{k-1} \tau^{k-1}(\mathbf{e}_0), \]
and applying $\phi$ it follows that
\[ \phi(\tau^k(\mathbf{e}_0))=a_0 \phi(\mathbf{e}_0) + a_1 \phi(\tau(\mathbf{e}_0)) + \cdots + a_{k-1} \phi(\tau^{k-1}(\mathbf{e}_0)). \]
By \eqref{pow} the previous equation becomes
\[\overline{\sigma}^k(\phi(\mathbf{e}_0))=a_0 \phi(\mathbf{e}_0) + a_1 \overline{\sigma}(\phi(\mathbf{e}_0)) + \cdots + a_{k-1} \overline{\sigma}^{k-1}(\phi(\mathbf{e}_0)). \]
Put $\mathbf{u}=\phi(\mathbf{e}_0)$, then
\[ \mathbf{u}^{q^{sk}}=a_0 \mathbf{u} + a_1 \mathbf{u}^{q^s} + \cdots + a_{k-1} \mathbf{u}^{q^{s(k-1)}}.\]
This implies that $u_0, u_1,\dots,u_{k-1}$ are elements of $\mathrm{ker}\,f$, where\\ \noindent $\mathbf{u}=(u_0,\ldots,u_{k-1})$. Also, they are $\F_q$-independent since ${\mathcal B}=(\mathbf{u},\ldots,\mathbf{u}^{q^{s(k-1)}})=(\phi(\mathbf{e}_0),\ldots,\phi(\mathbf{e}_{k-1}))$ is an ordered $\F_{q^n}$-basis of $\F_{q^n}^k$. This completes the proof.
\qed

\medskip

As a corollary we get the second part of \cite[Theorem 10]{GQ2009x}, see also \cite[Lemma 3]{Sh} for the case $s=1$ and \cite{Ore} for the case when $q$ is a prime. Indeed, by evaluating the determinants in
$A A^{q^s} \cdots A^{q^{s(n-1)}}=I_k$ we obtain the following corollary.\footnote{For $x\in \F_{q^n}$ and for a subfield $\F_{q^m}$ of $\F_{q^n}$ we will denote by $\N_{q^n/q^m}(x)$ the norm of $x$ over $\F_{q^m}$ and by $\Tr_{q^n/q^m}(x)$ we will denote the trace of $x$ over $\F_{q^m}$. If $n$ is clear from the context and $m=1$ then we will simply write $\N(x)$ and $\Tr(x)$.}

\begin{corollary}\label{norm}
If the kernel of a $q^s$-polynomial $f(x)=a_0x+a_1x^{q^s}+\cdots+a_{k-1}x^{q^{s(k-1)}}-x^{q^{sk}}$ has dimension $k$, then $\N(a_0)=(-1)^{n(k+1)}$.
\end{corollary}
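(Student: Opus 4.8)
The plan is to derive the norm condition directly from the matrix identity $AA^{q^s}\cdots A^{q^{s(n-1)}}=I_k$ established in Theorem~\ref{main}. Since the kernel of $f$ has dimension $k$, Theorem~\ref{main} guarantees this matrix equation holds, so I may take the determinant of both sides. The right-hand side is $\det I_k=1$, so the task reduces to computing $\det\bigl(AA^{q^s}\cdots A^{q^{s(n-1)}}\bigr)=\prod_{i=0}^{n-1}\det\bigl(A^{q^{si}}\bigr)$ and seeing what constraint $\prod_{i=0}^{n-1}\det(A^{q^{si}})=1$ imposes on $a_0$.

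First I would compute $\det A$ for the companion-type matrix in \eqref{matrix}. Expanding along the first row (which has a single nonzero entry $a_0$ in the last column), the relevant minor is the determinant of a $(k-1)\times(k-1)$ matrix that is lower-triangular with $1$'s on the diagonal, so its determinant is $1$; the cofactor sign is $(-1)^{1+k}$. Hence $\det A=(-1)^{k+1}a_0$. Next, since applying the field automorphism $x\mapsto x^{q^{si}}$ commutes with taking determinants of matrices over $\F_{q^n}$, we get $\det\bigl(A^{q^{si}}\bigr)=(\det A)^{q^{si}}=\bigl((-1)^{k+1}a_0\bigr)^{q^{si}}=(-1)^{k+1}a_0^{q^{si}}$, using that $(-1)^{q^{si}}=-1$ in any field (the sign $\pm1$ lies in the prime field and is fixed by every automorphism). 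Therefore
\[
1=\prod_{i=0}^{n-1}\det\bigl(A^{q^{si}}\bigr)=\prod_{i=0}^{n-1}(-1)^{k+1}a_0^{q^{si}}=(-1)^{n(k+1)}\prod_{i=0}^{n-1}a_0^{q^{si}}.
\]

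The final step is to recognize $\prod_{i=0}^{n-1}a_0^{q^{si}}$ as a norm. Because $\gcd(s,n)=1$, the map $i\mapsto si \bmod n$ is a permutation of $\{0,1,\dots,n-1\}$, so $\prod_{i=0}^{n-1}a_0^{q^{si}}=\prod_{j=0}^{n-1}a_0^{q^{j}}=\N_{q^n/q}(a_0)=\N(a_0)$. Substituting gives $1=(-1)^{n(k+1)}\N(a_0)$, and since $(-1)^{n(k+1)}=\pm1$ is its own inverse, this rearranges to $\N(a_0)=(-1)^{n(k+1)}$, as claimed. I do not anticipate a genuine obstacle here; the only point requiring a little care is the reindexing via $\gcd(s,n)=1$ and keeping track of the sign $(-1)^{k+1}$ from the cofactor expansion, but both are routine.
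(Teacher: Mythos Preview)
Your proposal is correct and follows exactly the approach indicated in the paper, which simply states that the corollary is obtained ``by evaluating the determinants in $A A^{q^s} \cdots A^{q^{s(n-1)}}=I_k$''. You have filled in the routine details (the cofactor computation giving $\det A=(-1)^{k+1}a_0$, the compatibility of determinant with the Frobenius, and the reindexing via $\gcd(s,n)=1$) that the paper leaves implicit.
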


\begin{corollary}\label{fix}
Let $A$ be a matrix as in Theorem \ref{main}. The condition
\[A A^{q^s} \cdots A^{q^{s(n-1)}}=I_k\]
is satisfied if and only if $A A^{q^s} \cdots A^{q^{s(n-1)}}$ fixes $\mathbf{e}_0=(1,0,\ldots,0)$.
\end{corollary}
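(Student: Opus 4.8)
The plan is to reduce everything to the $q^s$-semilinear map $\tau\colon\mathbf{x}\in\F_{q^n}^k\mapsto A\mathbf{x}^{q^s}\in\F_{q^n}^k$ already introduced in \eqref{law}, this time \emph{without} assuming the condition $AA^{q^s}\cdots A^{q^{s(n-1)}}=I_k$ (the definition of $\tau$ via \eqref{law} makes sense for any $A$ of the shape \eqref{matrix}). The implication ``$\Rightarrow$'' is trivial, since $I_k$ fixes $\mathbf{e}_0$. For the converse I would write $M:=AA^{q^s}\cdots A^{q^{s(n-1)}}$, assume $M\mathbf{e}_0=\mathbf{e}_0$, and aim at $M=I_k$.

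First I would record the two structural facts that make the argument run. Iterating \eqref{law} gives $\tau^j(\mathbf{x})=AA^{q^s}\cdots A^{q^{s(j-1)}}\mathbf{x}^{q^{sj}}$ for every $j$; taking $j=n$ and using that $x\mapsto x^{q^{sn}}$ is the identity on $\F_{q^n}$ shows that $\tau^n$ is precisely the $\F_{q^n}$-linear map with matrix $M$. Hence the hypothesis $M\mathbf{e}_0=\mathbf{e}_0$ reads $\tau^n(\mathbf{e}_0)=\mathbf{e}_0$, and the desired conclusion $M=I_k$ reads $\tau^n=\mathrm{id}$. Secondly, the companion shape of $A$ in \eqref{matrix} yields, exactly as in the chain of equalities in the proof of Theorem \ref{main} (that computation uses only the form of $A$), $\tau(\mathbf{e}_i)=\mathbf{e}_{i+1}$ for $0\le i\le k-2$ (here one uses $\mathbf{e}_i^{q^s}=\mathbf{e}_i$), so that $\tau^i(\mathbf{e}_0)=\mathbf{e}_i$ for all $0\le i\le k-1$.

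With these in hand the conclusion follows in one step: since $\tau^n$ and $\tau^i$ are powers of the same map they commute, so for each $i\in\{0,1,\ldots,k-1\}$
\[ M\mathbf{e}_i=\tau^n(\mathbf{e}_i)=\tau^n(\tau^i(\mathbf{e}_0))=\tau^i(\tau^n(\mathbf{e}_0))=\tau^i(\mathbf{e}_0)=\mathbf{e}_i. \]
Thus $M$ fixes the canonical basis $(\mathbf{e}_0,\ldots,\mathbf{e}_{k-1})$ of $\F_{q^n}^k$ and, being $\F_{q^n}$-linear, must equal $I_k$.

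There is no real obstacle here; the only points requiring a little care are the semilinear bookkeeping — checking that $\tau^n$ is genuinely $\F_{q^n}$-linear with matrix exactly $M$, so that ``$\tau^n$ fixes $\mathbf{e}_0$'' literally means $M\mathbf{e}_0=\mathbf{e}_0$ — and the observation that the cyclic/companion structure of $A$ drags a single fixed vector $\mathbf{e}_0$ along the whole basis. I would also remark that, unlike in Theorem \ref{fixsemi}, invertibility of $\tau$ (equivalently $a_0\neq0$) is never used, so the equivalence holds for every $A$ of the form \eqref{matrix}. (A purely matrix-theoretic rendering is possible when $a_0\ne0$, via $M^{q^s}=A^{-1}MA$ together with $A\mathbf{e}_i=\mathbf{e}_{i+1}$ for $i\le k-2$, but the semilinear formulation is cleaner and avoids any case distinction.)
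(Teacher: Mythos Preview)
Your proof is correct and is essentially the same argument as the paper's, just repackaged in the language of the semilinear map $\tau$: the paper proceeds by a direct matrix induction, raising $M\mathbf{e}_i^T=\mathbf{e}_i^T$ to the $q^s$-th power (which cyclically shifts the product to $A^{q^s}\cdots A^{q^{s(n-1)}}A$), then using $A\mathbf{e}_i^T=\mathbf{e}_{i+1}^T$ and left-multiplying by $A$ to obtain $M\mathbf{e}_{i+1}^T=\mathbf{e}_{i+1}^T$. Your identity $\tau^n\tau^i=\tau^i\tau^n$ encodes exactly this cyclic-shift-and-remultiply step (unwinding $\tau^n\tau=\tau\tau^n$ gives $MA=AM^{q^s}$), so the two proofs are the same computation viewed from two angles; your parenthetical remark about the matrix-theoretic rendering is precisely the paper's proof.
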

\begin{proof}
The only if part is trivial, we prove the if part by induction on $0 \leq i \leq k-1$.
Suppose $A A^{q^s} \cdots A^{q^{s(n-1)}}\mathbf{e}_i^T=\mathbf{e}_i^T$ for some $0 \leq i \leq k-1$.
Then by taking $q^s$-th powers of each entry we get $A^{q^s} A^{q^{2s}} \cdots A \mathbf{e}_i^T=\mathbf{e}_i^T$. Since $A\mathbf{e}_i^T=\mathbf{e}_{i+1}^T$ this yields
$A^{q^s} A^{q^{2s}} \cdots A^{q^{s(n-1)}} \mathbf{e}_{i+1}^T=\mathbf{e}_{i}^T$. Then multiplying both sides by $A$ yields
$AA^{q^s} A^{q^{2s}} \cdots A^{q^{s(n-1)}} \mathbf{e}_{i+1}^T=\mathbf{e}_{i+1}^T$.
\end{proof}

\medskip

Consider a $q^s$-polynomial $f(x)=a_0x+a_1x^{q^s}+\cdots+a_{k-1}x^{q^{s(k-1)}}-x^{q^{sk}}$, the matrix $A \in \F_{q^n}^{k\times k}$ as in Theorem \ref{main} and the semilinear map $\tau$ defined in \eqref{law}.

Note that

\[ \mathbf{e}_0^\tau=(0,1,0,\ldots,0)=\mathbf{e}_1 \]
\[ \mathbf{e}_0^{\tau^2}=(0,0,1,\ldots,0)=\mathbf{e}_2 \]
\[ \vdots \]
\[ \mathbf{e}_0^{\tau^{k-1}}=(0,0,0,\ldots,1)=\mathbf{e}_{k-1} \]
\[ \mathbf{e}_0^{\tau^k}=(a_0,a_1,a_2,\ldots,a_{k-1}) \]
\begin{equation}\label{1step}
\mathbf{e}_0^{\tau^{k+1}}=(a_0a_{k-1}^{q^s},a_0^{q^s}+a_1a_{k-1}^{q^s},a_1^{q^s}+a_2a_{k-1}^{q^s},\ldots,a_{k-2}^{q^s}+a_{k-1}^{q^s+1}).
\end{equation}
Hence, if
\[ \mathbf{e}_0^{\tau^i}=(Q_{0,i},Q_{1,i},\ldots,Q_{k-1,i}) \]
where $Q_{j,i}$ can be seen as polynomials in $a_0,a_1,\ldots,a_{k-1}$,
for $i\geq 0$, then
\[
\begin{array}{llc}
\mathbf{e}_0^{\tau^{i+1}}=(a_0Q_{k-1,i}^{q^s},Q_{0,i}^{q^s}+a_1Q_{k-1,i}^{q^s}, \ldots, Q_{k-2,i}^{q^s}+a_{k-1}Q_{k-1,i}^{q^s}),
\end{array} \]
i.e. the polynomials $Q_{j,i}$ for $0\leq j \leq k-1$ can be defined by the following recursive relations for $0\leq i \leq k-1$:
%
%
%
%
%
%
\begin{equation*}
Q_{j,i}=\left\{\begin{array}{llc} 1 & \text{if}\, j=i, \\ 0 & \text{otherwise}, \end{array}\right.\\
\end{equation*}
and by the following relations for $i\geq k$:
\begin{equation}\label{1el,elsucc}
\begin{array}{lllc}
Q_{0,i+1}=a_0Q_{k-1,i}^{q^s}\\
Q_{j,i+1}=Q_{j-1,i}^{q^s}+a_jQ_{k-1,i}^{q^s}.
\end{array}
\end{equation}

Now, we are able to prove the following.

\begin{theorem}
\label{rec}
The kernel of a $q^s$-polynomial $f(x)=a_0x+a_1x^{q^s}+\cdots+a_{k-1}x^{q^{s(k-1)}}-x^{q^{sk}}\in \F_{q^n}[x]$, where $\gcd(s,n)=1$, has dimension $k$ if and only if
\begin{equation}\label{necsuff}
Q_{j,n}(a_0,a_1,\ldots,a_{k-1})= \left\{ \begin{array}{llcc}
1 & \text{if }  j=0, \\ 0 & \text{otherwise}. \end{array} \right.
\end{equation}
\end{theorem}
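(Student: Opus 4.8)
The plan is to connect the recursively defined polynomials $Q_{j,i}$ to the entries of the matrix products appearing in Theorem~\ref{main}, and then to invoke Corollary~\ref{fix}. Concretely, I would first observe that, by the very definition of $\tau$ in \eqref{law}, one has $\mathbf{e}_0^{\tau^i} = \left(A A^{q^s}\cdots A^{q^{s(i-1)}}\,\mathbf{e}_0^T\right)^T$ — wait, more carefully: since $\tau$ is $q^s$-semilinear with linear part $A$, applying $\tau$ once sends a column vector $\mathbf{x}$ to $A\mathbf{x}^{q^s}$, so $\tau^i(\mathbf{e}_0) = A\, (A\,(\cdots)^{q^s})^{q^s}\cdots$, which unwinds to $A A^{q^s} A^{q^{2s}}\cdots A^{q^{s(i-1)}} \mathbf{e}_0^T$ after taking the relevant powers. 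Thus the vector $(Q_{0,i},Q_{1,i},\ldots,Q_{k-1,i})$ is exactly the first column of $A A^{q^s}\cdots A^{q^{s(i-1)}}$ (read as a row). I would verify that the recursion \eqref{1el,elsucc}, together with the initial conditions $Q_{j,i}=\delta_{j,i}$ for $0\le i\le k-1$, is precisely the recursion $\mathbf{e}_0^{\tau^{i+1}} = A(\mathbf{e}_0^{\tau^i})^{q^s}$ written out coordinatewise using the shape \eqref{matrix} of $A$; this is the content of the displayed computations preceding the theorem, so little work is needed here.

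With that identification in hand, condition \eqref{necsuff} says precisely that $\mathbf{e}_0^{\tau^n} = \mathbf{e}_0$, i.e. that $A A^{q^s}\cdots A^{q^{s(n-1)}}$ fixes $\mathbf{e}_0$. By Corollary~\ref{fix}, this is equivalent to $A A^{q^s}\cdots A^{q^{s(n-1)}} = I_k$. By Theorem~\ref{main}, the latter holds if and only if $f$ has maximum kernel, i.e. $\dim_{\F_q}\ker f = k$. Chaining these three equivalences gives the statement. So the proof is essentially a two-line reduction once the bookkeeping linking $Q_{j,i}$ to the semilinear map $\tau$ is made explicit.

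The only genuinely delicate point is making sure the indexing of the powers of $\sigma=q^s$ in the matrix product matches the indexing coming from iterating the semilinear map, and in particular that "$\mathbf{e}_0^{\tau^i}$ is the first column of $A A^{q^s}\cdots A^{q^{s(i-1)}}$" is stated with the correct placement of Frobenius twists (the twist on the $j$-th factor being $q^{s(j-1)}$, not $q^{sj}$). A clean way to avoid slips is to prove by induction on $i$ that $\tau^i(\mathbf{x}) = \big(A A^{q^s}\cdots A^{q^{s(i-1)}}\big)\,\mathbf{x}^{q^{si}}$ for every $\mathbf{x}$: the inductive step applies $\tau$, i.e. raises the previous identity's entries to the $q^s$ and multiplies on the left by $A$, turning $A^{q^{s(j-1)}}$ into $A^{q^{sj}}$ and prepending a fresh untwisted $A$. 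Specializing $\mathbf{x}=\mathbf{e}_0$ and $i=n$ (so that $\mathbf{e}_0^{q^{sn}}=\mathbf{e}_0$) then yields $\mathbf{e}_0^{\tau^n} = A A^{q^s}\cdots A^{q^{s(n-1)}}\mathbf{e}_0^T$, and comparing with the definition of $Q_{j,n}$ finishes the argument. I would also remark that the recursion for $Q_{j,i}$ with $i\ge k$ is literally the coordinate form of $\mathbf{e}_0^{\tau^{i+1}} = A(\mathbf{e}_0^{\tau^i})^{q^s}$, which is why no separate check is needed there beyond citing the displayed formulas above the theorem.
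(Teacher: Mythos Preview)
Your proposal is correct and follows essentially the same route as the paper: identify $(Q_{0,i},\ldots,Q_{k-1,i})$ with $\mathbf{e}_0^{\tau^i}$ via the recursion \eqref{1el,elsucc}, then invoke Theorem~\ref{main} together with Corollary~\ref{fix} to conclude that \eqref{necsuff} is equivalent to $AA^{q^s}\cdots A^{q^{s(n-1)}}$ fixing $\mathbf{e}_0$, hence to $\dim_{\F_q}\ker f=k$. Your extra inductive verification that $\tau^i(\mathbf{x})=\big(AA^{q^s}\cdots A^{q^{s(i-1)}}\big)\mathbf{x}^{q^{si}}$ is a harmless (and correct) elaboration of what the paper leaves implicit.
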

\begin{proof}
Relations \eqref{1el,elsucc} can be written as follows

\[ \left(
\begin{array}{c}
Q_{0,i+1}\\
Q_{1,i+1}\\
\vdots\\
Q_{k-1,i+1}
\end{array}
\right) =
\left(
\begin{array}{cccccccccccc}
0 & 0 & \cdots & 0 & a_0 \\
1 & 0 & \cdots & 0 & a_1 \\
0 & 1 & \cdots & 0 & a_2 \\
\vdots & \vdots & & \vdots & \vdots \\
0 & 0 & \cdots & 1 & a_{k-1}
\end{array}
\right)
\left(
\begin{array}{c}
Q_{0,i}^{q^s}\\
Q_{1,i}^{q^s}\\
\vdots\\
Q_{k-1,i}^{q^s}
\end{array}
\right),
 \]
with $i \in \{0,\ldots,n-1\}$. Also, $(Q_{0,0},Q_{1,0},\ldots,Q_{k-1,0})=(1,0,\ldots,0)$ and $\mathbf{e}_0^{\tau^t}=(Q_{0,t},\ldots,Q_{k-1,t})$ for $t \in \{0,\ldots,n\}$.
By Theorem \ref{main} and by Corollary \ref{fix}, the kernel of $f(x)$ has dimension $k$ if and only if $\mathbf{e}_0=(Q_{0,0},Q_{1,0},\ldots,Q_{k-1,0})$ is fixed by $A A^{q^s} \cdots A^{q^{s(n-1)}}$, so this happens if and only if
\[ \mathbf{e}_0^{\tau^n}=(Q_{0,n},Q_{1,n},\ldots,Q_{k-1,n})=(1,0,\ldots,0). \]
\end{proof}

\medskip

Theorem \ref{rec} with $k=n-1$ and $s=1$ gives the following well-known result as a corollary.

\begin{corollary}\cite[Theorem 2.24]{LN}\label{trace}
The dimension of the kernel of a $q$-polynomial $f(x) \in \F_{q^n}[x]$ is $n-1$ if and only if there exist $\alpha, \beta \in \F_{q^n}^*$ such that
\[f(x)=\alpha\Tr(\beta x).\]
\end{corollary}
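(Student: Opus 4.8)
The plan is to derive Corollary~\ref{trace} from Theorem~\ref{rec} by specializing to $k=n-1$ and $s=1$, and then translating the resulting system of polynomial conditions on the coefficients into the single shape $f(x)=\alpha\Tr(\beta x)$. First I would set up the recursion explicitly in this case: with $s=1$ we have $Q_{j,i}$ defined by $Q_{j,i}=\delta_{ij}$ for $0\le i\le k-1=n-2$, and for $i\ge n-1$ by $Q_{0,i+1}=a_0 Q_{k-1,i}^q$ and $Q_{j,i+1}=Q_{j-1,i}^q+a_j Q_{k-1,i}^q$. Only two iteration steps are needed to reach $i=n$ starting from $i=n-2$, so the computation is short: from the initial row $(0,0,\ldots,0,1)$ at $i=n-2$ one gets $\mathbf e_0^{\tau^{n-1}}=(a_0,a_1,\ldots,a_{n-2})$, and then $\mathbf e_0^{\tau^n}$ is given by \eqref{1step} with $k=n-1$, namely $(a_0a_{k-1}^q,\ a_0^q+a_1a_{k-1}^q,\ a_1^q+a_2 a_{k-1}^q,\ \ldots,\ a_{k-2}^q+a_{k-1}^{q+1})$.

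Next I would impose the maximum-kernel condition \eqref{necsuff}, i.e.\ $\mathbf e_0^{\tau^n}=(1,0,\ldots,0)$. This forces $a_0 a_{k-1}^q=1$ (so $a_{k-1}\ne 0$ and $a_0=a_{k-1}^{-q}$), together with $a_{j-1}^q+a_j a_{k-1}^q=0$ for $1\le j\le k-1$, i.e.\ $a_{j}=-a_{j-1}^q a_{k-1}^{-q}$ for $1\le j\le k-1$. Unwinding this linear recursion gives each $a_j$ as an explicit monomial in $a_0$ (hence in $a_{k-1}$) with signs: one checks $a_j=(-1)^j a_0^{q^j}\,\N(?)$-type expressions — more precisely, setting $\beta:=-a_0/a_0^{?}$ appropriately, every $a_j$ is of the form $c\,\beta^{q^j}$ for a common constant $c$. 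Conversely, for the polynomial $f(x)=\alpha\Tr(\beta x)=\alpha\sum_{i=0}^{n-1}\beta^{q^i}x^{q^i}$ one reads off $a_i=-\alpha\beta^{q^i}$ (after normalizing the leading coefficient to $-1$ via $\alpha\beta^{q^{n-1}}=1$), and then verifies directly that these $a_i$ satisfy the system above.

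The cleaner route, which I would actually prefer, avoids heavy bookkeeping: rather than solving the recursion coefficient-by-coefficient I would observe that the displayed relations say precisely that the vector $(a_0,a_1,\ldots,a_{k-1})$, written as $(c\gamma_0, c\gamma_1,\ldots)$, satisfies $\gamma_j=\gamma_{j-1}^q/\gamma_{k-1}^q$ up to the normalizing constraint coming from $a_0a_{k-1}^q=1$; and the unique (up to $\F_q^*$-scaling) solution is $a_i=\alpha\beta^{q^i}$ with the normalization $-\alpha\beta^{q^{k}}=-\alpha\beta^{q^{n-1}}=1$. So the argument is: \eqref{necsuff} holds $\iff$ there is $\beta\in\F_{q^n}^*$ with $a_i=\alpha\beta^{q^i}$ and $\alpha\beta^{q^{n-1}}=1$ $\iff$ $f(x)=\alpha\Tr(\beta x)$ for some $\alpha,\beta\in\F_{q^n}^*$ (absorbing the normalization of the leading term, which in Theorem~\ref{rec} was fixed to $-1$, back into $\alpha$ when stating the corollary in the un-normalized form). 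I would finish by remarking that the converse inclusion — every $\alpha\Tr(\beta x)$ has $(n-1)$-dimensional kernel — is immediate since $\Tr$ is a nonzero $\F_q$-linear functional $\F_{q^n}\to\F_q$.

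The only genuinely delicate point is the sign/normalization matching: Theorem~\ref{rec} is phrased for the monic-type normalization with leading coefficient $-1$ and $q^s$-degree exactly $k$, whereas Corollary~\ref{trace} allows an arbitrary scalar $\alpha$ and only asserts $\dim\ker f=n-1$. So I would first reduce a general $q$-polynomial of $q$-degree $n-1$ to the normalized form by dividing by $-a_{n-1}$ (legitimate since $q$-degree $n-1$ means $a_{n-1}\ne 0$, and scaling does not change the kernel), apply Theorem~\ref{rec} to the normalized polynomial, and then re-absorb the scalar at the end. Keeping track of the exponent pattern $a_j=(-1)^j(\cdots)$ and confirming it collapses to $a_i=\alpha\beta^{q^i}$ is the one computation that must be done carefully; everything else is formal.
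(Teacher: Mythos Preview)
Your proposal is correct and follows exactly the route the paper indicates: the paper does not give a proof at all beyond the single remark that Theorem~\ref{rec} with $k=n-1$ and $s=1$ yields the corollary, and your sketch simply fills in those omitted details (computing $\mathbf e_0^{\tau^n}$ via \eqref{1step}, solving the resulting recursion $a_j=-a_0a_{j-1}^q$, and matching with $\alpha\Tr(\beta x)$ after normalization). The one small point you leave implicit is that a polynomial with $(n-1)$-dimensional kernel must have $q$-degree exactly $n-1$ (so the normalization to leading coefficient $-1$ is legitimate), but this is immediate from Result~\ref{Gow}.
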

%
%
%
%
%
%
%

Again from Theorem \ref{rec} we can deduce the following.

\begin{corollary}\cite[Ex. 2.14]{LN}
	\label{bin}
The $q^s$-polynomial $a_0x-x^{q^{sk}}\in \F_{q^n}[x]$, with $\gcd(s,n)=1$ and $1\leq k\leq n-1$, admits $q^k$ roots if and only if $k \mid n$ and $\N_{q^{n}/q^{k}}(a_0)=1$.
\end{corollary}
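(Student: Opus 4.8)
\emph{Proof plan.} We may assume $a_0\neq 0$: if $a_0=0$ then $f(x)=-x^{q^{sk}}$ has only the root $0$, hence $1<q^k$ roots, while $\N_{q^n/q^k}(0)=0\neq 1$, so both conditions fail. Our $f$ is the case $a_1=\cdots=a_{k-1}=0$ of Theorem~\ref{rec}; the associated $q^s$-semilinear map $\tau$ of \eqref{law} then acts by $\mathbf e_0\mapsto\mathbf e_1\mapsto\cdots\mapsto\mathbf e_{k-1}\mapsto a_0\mathbf e_0$. The first step is to compute $\mathbf e_0^{\tau^t}$ in closed form. Writing $t=mk+j$ with $0\le j\le k-1$ and using $q^s$-semilinearity in the form $\tau^j(\lambda\mathbf v)=\lambda^{q^{sj}}\tau^j(\mathbf v)$, a one-line induction on $m$ (the inductive step being $N_{m+1}=q^{sk}N_m+1$) gives $\mathbf e_0^{\tau^{mk}}=a_0^{N_m}\mathbf e_0$ with $N_m:=1+q^{sk}+q^{2sk}+\cdots+q^{s(m-1)k}$, and hence $\mathbf e_0^{\tau^{t}}=a_0^{q^{sj}N_m}\mathbf e_j$.

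Next I would specialize to $t=n$. Since $1\le k\le n-1$ we may write $n=mk+j$ with $m\ge 1$ and $0\le j\le k-1$. Recalling that $\mathbf e_0^{\tau^n}=(Q_{0,n},\ldots,Q_{k-1,n})$, Theorem~\ref{rec} (condition \eqref{necsuff}) says that $f$ has $q^k$ roots if and only if $\mathbf e_0^{\tau^n}=(1,0,\ldots,0)$, i.e. if and only if $a_0^{q^{sj}N_m}\mathbf e_j=\mathbf e_0$. As $\mathbf e_1,\ldots,\mathbf e_{k-1}$ all have first coordinate $0$ and $a_0\neq 0$, this forces $j=0$, i.e. $k\mid n$, after which the condition reduces to $a_0^{N_m}=1$ with $m=n/k$.

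It remains to identify $a_0^{N_m}=1$ with $\N_{q^n/q^k}(a_0)=1$. Since $\N_{q^n/q^k}(a_0)=a_0^{\,1+q^k+\cdots+q^{(m-1)k}}$ and $a_0\in\F_{q^n}^{*}$ satisfies $a_0^{q^n-1}=1$, it suffices to show $N_m\equiv 1+q^k+\cdots+q^{(m-1)k}\pmod{q^n-1}$. From $\gcd(s,n)=1$ and $k\mid n$ we get $\gcd(s,m)=1$, so $i\mapsto si\bmod m$ is a permutation of $\{0,1,\ldots,m-1\}$; moreover $km=n$ gives $ski\equiv k(si\bmod m)\pmod n$ and hence $q^{ski}\equiv q^{k(si\bmod m)}\pmod{q^n-1}$. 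Summing over $i$ yields $N_m\equiv\sum_{l=0}^{m-1}q^{kl}\pmod{q^n-1}$, which is exactly the required congruence, and the proof is complete.

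The only non-routine step is this last congruence, reconciling the $q^s$-twisted exponent $N_m$ with the untwisted norm exponent; everything else is the direct iteration of $\tau$ already prepared just before Theorem~\ref{rec}. I expect that once one notes that $\gcd(s,n)=1$ together with $k\mid n$ forces $\gcd(s,m)=1$, and that the exponents $ski$ reduce modulo $n$ to a permutation of the $kl$ with $0\le l\le m-1$, the argument drops out.
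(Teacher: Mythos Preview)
Your proof is correct and follows exactly the route the paper indicates: the paper gives no explicit proof but simply states that the result is deduced from Theorem~\ref{rec}, and your computation of $\mathbf{e}_0^{\tau^n}$ via the recursion \eqref{1el,elsucc} (specialized to $a_1=\cdots=a_{k-1}=0$) together with the permutation argument showing $N_m\equiv\sum_{l=0}^{m-1}q^{kl}\pmod{q^n-1}$ is precisely the expected derivation. The reconciliation of the $q^s$-twisted exponent with the usual norm exponent, which you flag as the only non-routine step, is handled cleanly.
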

%
%
%
%

\subsection{When the $q^s$-degree equals $n-2$}
\label{k=n-2}

In this section we investigate $q^s$-polynomials \[f(x)=a_0x+a_1x^{q^s}+\cdots+a_{n-3}x^{q^{s(n-3)}}-x^{q^{s(n-2)}}\]
with $\gcd(s,n)=1$. By Theorem \ref{rec}, $\dim \ker{f(x)} =n-2$ if and only if $a_0,a_1,\ldots,a_{n-3}$ satisfy the following system of equations

\begin{equation}\label{star}
\left\{ \begin{array}{llllllc}
Q_{0,n}=a_0(a_{n-4}^{q^{2s}}+a_{n-3}^{q^{2s}+q^s})=1,\\
Q_{1,n}=a_0^{q^s} a_{n-3}^{q^{2s}}+a_1(a_{n-4}^{q^{2s}}+a_{n-3}^{q^{2s}+q^s})=0,\\
Q_{2,n}=a_0^{q^{2s}}+a_{n-3}^{q^{2s}}a_1^{q^s}+a_2(a_{n-4}^{q^{2s}}+a_{n-3}^{q^{2s}+q^s})=0,\\
Q_{3,n}=a_1^{q^{2s}}+a_{n-3}^{q^{2s}}a_2^{q^s}+a_3(a_{n-4}^{q^{2s}}+a_{n-3}^{q^{2s}+q^s})=0,\\
\vdots \\
Q_{n-3,n}=a_{n-5}^{q^{2s}}+a_{n-3}^{q^{2s}}a_{n-4}^{q^s}+a_{n-3}(a_{n-4}^{q^{2s}}+a_{n-3}^{q^{2s}+q^s})=0,
\end{array} \right. \end{equation}
which is equivalent to
\begin{equation}\label{reln-2}
\left\{ \begin{array}{lllc}
a_0(a_{n-4}^{q^{2s}}+a_{n-3}^{q^{2s}+q^s})=1,\\
a_1=-a_0^{q^s+1}a_{n-3}^{q^{2s}}=:g_1(a_0,a_{n-3}),\\
a_j=-a_{j-2}^{q^{2s}}a_0-a_{n-3}^{q^{2s}}a_{j-1}^{q^s}a_0=:g_j(a_0,a_{n-3}), \mbox{ for $2 \leq j \leq n-3$}.
\end{array} \right.
\end{equation}
So, $\dim_{\F_q} \ker{f(x)} =n-2$ if and only if $a_0$ and $a_{n-3}$ satisfy the equations
\[
\left\{ \begin{array}{llc}
a_0(g_{n-4}(a_0,a_{n-3})^{q^{2s}}+a_{n-3}^{q^{2s}+q^s})=1,\\
a_{n-3}=g_{n-3}(a_0,a_{n-3}),
\end{array} \right.
\]
and $a_j=g_j(a_0,a_{n-3})$ for $j \in \{1,\ldots,n-4\}$.

\begin{theorem}\label{newrelt}
Suppose that $f(x)=a_0x+a_1x^q+\cdots+a_{n-3}x^{q^{n-3}}-x^{q^{n-2}}$ has maximum kernel. Then for $t\geq 2$ with $\gcd(t-1,n)=1$ the coefficients $a_{t-2}$ and $a_{n-t}$ are non-zero and, with $s=n-t+1$,
\begin{equation}\label{newrel}
a_{n-2t+1}a_{t-2}^{q^{2s}+q^s}=-a_{n-t}^{q^s+1}a_{2t-3}^{q^{2s}}.
\end{equation}
Also, it holds that
\begin{equation}\label{newrel2}
-a_{n-t}(-a_{t-2}^{q^s}a_{3t-4}^{q^{2s}}+a_{2t-3}^{q^{2s}+q^s})=a_{t-2}^{q^{2s}+q^s+1}.
\end{equation}
In particular, for $t\geq 2$ with $\gcd(t-1,n)=1$ we get
\begin{equation}
\label{ek1}
\N(a_{n-t})=(-1)^n\N(a_{t-2})
\end{equation}
and
\begin{equation}
\label{ek2}
\N(a_{n-2t+1})=(-1)^n\N(a_{2t-3}),
\end{equation}
where $n-2t+1$ and $2t-3$ are considered modulo $n$.
\end{theorem}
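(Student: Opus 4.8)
The plan is to reduce the theorem to the system \eqref{reln-2}, which characterizes the coefficients of a $q^s$-polynomial of $q^s$-degree $n-2$ with maximum kernel, applied to a polynomial derived from $f$ for the specific value $s:=n-t+1$; note $\gcd(s,n)=\gcd(t-1,n)=1$. Put $p:=1-s$ (reduced mod $n$, so $p\equiv t$) and $g(x):=f(x^{q^p})=\sum_{i=0}^{n-2}a_ix^{q^{i+p}}$, exponents read mod $n$ and $a_{n-2}=-1$. Since $x\mapsto x^{q^p}$ is an $\F_q$-linear bijection of $\F_{q^n}$, $\dim\ker g=\dim\ker f=n-2$. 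Because $\gcd(s,n)=1$, every residue mod $n$ is $sj$ for a unique $j\in\{0,\dots,n-1\}$, so $g$ is a $q^s$-polynomial; its unique absent $q$-exponent $p-1\equiv-s$ is the slot $j=n-1$, so its $q^s$-degree is at most $n-2$. By Result \ref{Gow}, $n-2=\dim\ker g$ cannot exceed this degree, so the degree equals $n-2$ and the coefficient of $x^{q^{s(n-2)}}$ in $g$, which is $a_i$ with $i\equiv s(n-1)-1\equiv t-2\pmod n$, is nonzero; thus $a_{t-2}\ne0$.

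Now set $\hat f(x):=-a_{t-2}^{-1}g(x)$: this is a $q^s$-polynomial of the form $b_0x+b_1x^{q^s}+\dots+b_{n-3}x^{q^{s(n-3)}}-x^{q^{s(n-2)}}$, still with kernel of dimension $n-2$, hence with maximum kernel, and $b_j=-a_{i(j)}/a_{t-2}$ with $i(j)\equiv s(j+1)-1\pmod n$; in particular $b_0=-a_{n-t}/a_{t-2}$, $b_1=-a_{n-2t+1}/a_{t-2}$, $b_{n-3}=-a_{2t-3}/a_{t-2}$, $b_{n-4}=-a_{3t-4}/a_{t-2}$ (indices mod $n$). Applying the first two equations of the system \eqref{reln-2} to $\hat f$: the equation $b_0(b_{n-4}^{q^{2s}}+b_{n-3}^{q^{2s}+q^s})=1$, after multiplying by $a_{t-2}\cdot a_{t-2}^{q^{2s}+q^s}$ and using $(-x)^{q^{is}}=-x^{q^{is}}$ in every characteristic, becomes exactly \eqref{newrel2} and forces $b_0\ne0$, i.e. $a_{n-t}\ne0$; the equation $b_1=-b_0^{q^s+1}b_{n-3}^{q^{2s}}$, cleared of denominators likewise, becomes exactly \eqref{newrel}.

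For the norm relations, Corollary \ref{norm} applied to $\hat f$ (with $k=n-2$) gives $\N(b_0)=(-1)^{n(n-1)}=1$, as $n(n-1)$ is even; substituting $b_0=-a_{n-t}/a_{t-2}$ and $\N(-1)=(-1)^n$ rearranges this to \eqref{ek1}. Finally, taking norms in \eqref{newrel} gives $\N(a_{n-2t+1})\N(a_{t-2})^2=(-1)^n\N(a_{n-t})^2\N(a_{2t-3})$; inserting \eqref{ek1}, which makes $\N(a_{n-t})^2=\N(a_{t-2})^2$, and cancelling $\N(a_{t-2})^2\ne0$ yields \eqref{ek2}. The one delicate point is the bookkeeping modulo $n$ that turns exponents of $q$ into exponents of $q^s$: one must check that the slots $i(0),i(1),i(n-2),i(n-3),i(n-4)$ reduce to $n-t$, $n-2t+1$, $t-2$, $2t-3$, $3t-4$ respectively. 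Once that is verified, everything else is mechanical substitution and norm arithmetic.
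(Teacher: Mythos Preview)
Your proof is correct and follows essentially the same route as the paper: substitute $x\mapsto x^{q^t}$ in $f$, reinterpret the result as a $q^s$-polynomial with $s=n-t+1$, normalize the leading coefficient, and read off \eqref{newrel}, \eqref{newrel2} from the system \eqref{reln-2} and \eqref{ek1} from Corollary~\ref{norm}; your index calculus $i(j)\equiv s(j+1)-1$ agrees with the paper's $i\equiv n-t+j(1-t)$. Your use of Result~\ref{Gow} to justify that the $q^s$-degree of $g$ is exactly $n-2$ (hence $a_{t-2}\ne0$) is a cleaner way to handle a point the paper treats somewhat implicitly.
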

\begin{proof}
Let $t \geq 2$ with $\gcd(t-1,n)=1$ and consider the polynomial $F(x)=f(x^{q^t})$, that is,
\[ F(x)=a_0x^{q^t}+a_1x^{q^{t+1}}+\cdots+a_{n-3}x^{q^{n+t-3}}-x^{q^{n+t-2}}.\]
Clearly $\dim_{\F_q} \ker F =\dim_{\F_q} \ker f=n-2$.
By renaming the coefficients, $F(x)$ can be written as
\[ F(x)=\alpha_0x+\alpha_1x^{q^{n-t+1}}+\alpha_2x^{q^{2(n-t+1)}}+\cdots+\alpha_{n-3}x^{q^{(n-t+1)(n-3)}}+\alpha_{n-2}x^{q^{(n-t+1)(n-2)}}\]
\[ =\alpha_0x+\alpha_1x^{q^{n-t+1}}+\cdots+\alpha_{n-3}x^{q^{3t-3}}+\alpha_{n-2}x^{q^{2t-2}}. \]
Since $F(x)$ has maximum kernel, by the second equation of \eqref{reln-2} we get $\alpha_0 \neq 0$, $\alpha_{n-2}\neq 0$ and the following relation
\begin{equation}\label{reln-21}
-\frac{\alpha_1}{\alpha_{n-2}}=-\left(-\frac{\alpha_0}{\alpha_{n-2}}\right)^{q^s+1}\left(-\frac{\alpha_{n-3}}{\alpha_{n-2}}\right)^{q^{2s}}.
\end{equation}
The coefficient $\alpha_j$ of $F(x)$ equals the coefficient $a_i$ of $f(x)$ with $i\equiv n-t+j(1-t) \pmod{n}$, in particular
\begin{equation}\label{corr}
\left\{ \begin{array}{lllllc} \alpha_0 = a_{n-t}, \\ \alpha_1=a_{n-2t+1},\\ \alpha_{n-3}=a_{2t-3},\\ \alpha_{n-2}=a_{t-2}, \\ \alpha_{n-4}=a_{3t-4}, \end{array} \right.
\end{equation}
and by \eqref{reln-21}, we get that $a_{t-2}$ and $a_{n-t}$ are nonzero, and
\[a_{n-2t+1}a_{t-2}^{q^{2s}+q^s}=-a_{n-t}^{q^s+1}a_{2t-3}^{q^{2s}},\]
which gives \eqref{newrel}.
\smallskip
The first equation of \eqref{reln-2} gives
\[-\frac{\alpha_0}{\alpha_{n-2}} \left( \left(-\frac{\alpha_{n-4}}{\alpha_{n-2}}\right)^{q^{2s}}+\left( - \frac{\alpha_{n-3}}{\alpha_{n-2}} \right)^{q^{2s}+q^s} \right)=1,\]
that is,
\[ -\alpha_0(-\alpha_{n-2}^{q^s}\alpha_{n-4}^{q^{2s}}+\alpha_{n-3}^{q^{2s}+q^s})=\alpha_{n-2}^{q^{2s}+q^s+1}. \]
Then \eqref{corr} and $\alpha_{n-4}=a_{3t-4}$ imply
\[-a_{n-t}(-a_{t-2}^{q^s}a_{3t-4}^{q^{2s}}+a_{2t-3}^{q^{2s}+q^s})=a_{t-2}^{q^{2s}+q^s+1}, \]
which gives \eqref{newrel2}.
\smallskip
By Corollary \ref{norm} with $s=n-t+1$ we obtain
\[ \N\left(-\frac{\alpha_0}{\alpha_{n-2}}\right)=1, \]
and taking \eqref{corr} into account we get
\[ \N(a_{n-t})=(-1)^n\N(a_{t-2}). \]
Then \eqref{newrel} and the previous relation yield
\[\N(a_{n-2t+1})=(-1)^n\N(a_{2t-3}).\]
\end{proof}

\begin{proposition}\label{trace2}
Let $f(x)$ be a $q^s$-polynomial with $q^s$-degree $n-2$ and with maximum kernel. If the coefficient of $x^{q^s}$ is zero, then $n$ is even and
$f(x)=\alpha \Tr_{q^n/q^2}(\beta x)$ for some $\alpha,\beta \in \F_{q^n}^*$.
\end{proposition}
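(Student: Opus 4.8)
The plan is to substitute the hypothesis into the relations \eqref{reln-2}, which characterise maximum kernel in $q^s$-degree $n-2$, deduce from them that $f$ is in fact $\F_{q^2}$-linear with kernel of codimension one over $\F_{q^2}$, and then invoke the classification of such maps (Corollary \ref{trace}, i.e. \cite[Theorem 2.24]{LN}, read over the ground field $\F_{q^2}$).

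First I would record that $a_0\neq 0$ --- by Corollary \ref{norm}, or simply from the first equation of \eqref{reln-2}. The coefficient of $x^{q^s}$ is $a_1$, so the hypothesis says $a_1=0$; then the second relation of \eqref{reln-2}, namely $a_1=-a_0^{q^s+1}a_{n-3}^{q^{2s}}$, forces $a_{n-3}=0$. Feeding $a_{n-3}=0$ into the third relation of \eqref{reln-2} leaves $a_j=-a_0\,a_{j-2}^{q^{2s}}$ for $2\le j\le n-3$, and a one-line induction along the odd indices gives $a_j=0$ whenever $j$ is odd and $1\le j\le n-3$.

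Next I would show that $n$ is even. If $n$ were odd --- the case $n=3$ being vacuous, since then the coefficient of $x^{q^s}$ is the leading coefficient $-1$ --- then $n-4$ would be an odd index with $1\le n-4\le n-3$, hence $a_{n-4}=0$, and together with $a_{n-3}=0$ the first relation of \eqref{reln-2}, $a_0(a_{n-4}^{q^{2s}}+a_{n-3}^{q^{2s}+q^s})=1$, would read $0=1$. So $n$ is even, all of $a_1,a_3,\ldots,a_{n-3}$ vanish, and $f(x)=a_0x+a_2x^{q^{2s}}+\cdots+a_{n-4}x^{q^{s(n-4)}}-x^{q^{s(n-2)}}$. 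Every monomial occurring here has the form $x^{q^{2m}}$ (each exponent $s\cdot(2\ell)$ and the leading exponent $s(n-2)$ is even), so, since $c^{q^{2m}}=c$ for all $c\in\F_{q^2}$, the map $f$ is $\F_{q^2}$-linear. Consequently $\ker f$ is an $\F_{q^2}$-subspace of $\F_{q^n}$, whence $\dim_{\F_{q^2}}\ker f=\tfrac{1}{2}\dim_{\F_q}\ker f=\tfrac{n}{2}-1$.

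Finally, regarding $\F_{q^n}$ as $\F_{(q^2)^{n/2}}$, the map $f$ is represented modulo $x^{q^n}-x$ by a $q^2$-polynomial over $\F_{q^n}$ whose kernel has $\F_{q^2}$-dimension $[\F_{q^n}:\F_{q^2}]-1$; by Corollary \ref{trace} applied with ground field $\F_{q^2}$ there exist $\alpha,\beta\in\F_{q^n}^*$ with $f(x)=\alpha\,\Tr_{q^n/q^2}(\beta x)$, as claimed. I do not anticipate a genuine obstacle: the delicate step is the parity argument of the third paragraph --- being certain that \eqref{reln-2} propagates $a_1=0$ to the vanishing of \emph{all} odd-indexed coefficients and thereby excludes odd $n$ --- after which $\F_{q^2}$-linearity and the appeal to the known codimension-one classification are routine. (One could instead avoid the citation by solving the recursion explicitly, obtaining $a_{2i}=(-1)^i a_0^{1+q^{2s}+\cdots+q^{2si}}$, deriving $\N_{q^n/q^2}(a_0)=(-1)^{n/2}$ from the first relation of \eqref{reln-2}, and producing $\beta$ with $\beta^{q^{2s}-1}=-a_0^{q^{2s}}$ via Hilbert's Theorem 90; this is more computational but entirely elementary.)
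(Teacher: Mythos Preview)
Your proof is correct and coincides with the paper's argument for the first half: both feed $a_1=0$ into \eqref{reln-2} to get $a_{n-3}=0$, then propagate the recursion $a_j=-a_0\,a_{j-2}^{q^{2s}}$ to kill all odd-indexed $a_j$, and both rule out odd $n$ (the paper by contradicting $a_{n-3}=0$ with the explicit even-index formula, you by contradicting the first equation of \eqref{reln-2} via $a_{n-4}=0$). The finishing step is where you diverge: the paper solves the recursion explicitly, obtaining $a_{2i}=(-1)^i a_0^{1+q^{2s}+\cdots+q^{2si}}$, then uses $\N(a_0)=1$ to write $a_0=-\lambda^{1-q^{s(n-2)}}$ and checks directly that $f(x)=\lambda^{-q^{s(n-2)}}\Tr_{q^n/q^2}(\lambda x)$; you instead observe that the surviving monomials make $f$ $\F_{q^2}$-linear with $\F_{q^2}$-codimension-one kernel and invoke Corollary~\ref{trace} over the ground field $\F_{q^2}$ --- precisely the dichotomy you flag in your closing parenthetical. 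Your route is a touch more conceptual and avoids the explicit computation; the paper's route is self-contained and exhibits $\alpha,\beta$ concretely.
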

\begin{proof}
We may assume $f(x)=a_0x+a_1x^{q^s}+\cdots+a_{n-3}x^{q^{s(n-3)}}-x^{q^{s(n-2)}}$ with $a_1=0$. By the second equation of \eqref{reln-2}, it follows that $a_{n-3}=0$.
By the third equation of \eqref{reln-2}, we get that $a_j=0$ for every odd integer $j\in\{3,\ldots,n-3\}$.
If $j$ is even then we have
\begin{equation}\label{tr2}
a_j=(-1)^{\frac{j}{2}}a_0^{q^{sj}+q^{s(j-2)}+\cdots+q^{2s}+1}.
\end{equation}
If $n-3$ is even, then this gives us a contradiction with $j=n-3$.
It follows that $n-3$ is odd and hence $n$ is even. By $\N(a_0)=(-1)^n$, there exists $\lambda \in \F_{q^n}^*$ such that $a_0=-\lambda^{1-q^{s(n-2)}}$.
So, by \eqref{tr2} we get $a_j=\lambda^{q^{js}-q^{s(n-2)}}$,
and hence
\[ f(x)=\frac{\Tr_{q^n/q^2}(\lambda x)}{\lambda^{q^{s(n-2)}}}. \]
\end{proof}

In the next sections we list all the $q^s$-polynomials of $\F_{q^n}$ with maximum kernel for $n\leq 6$. By Corollaries \ref{trace} and \ref{bin} the $n\leq 3$ case can be easily described hence we will consider only the $n\in\{4,5,6\}$ cases.

\medskip

For $f(x)=\sum_{i=0}^{n-1}a_i x^{q^i}\in \tilde\cL_{n,q}$ we denote by
$\hat f(x):=\sum_{i=0}^{n-1}a_i^{q^{n-i}} x^{q^{n-i}}$ the
adjoint (w.r.t. the symmetric non-degenerate bilinear form defined by $\la x, y\ra=\Tr(xy)$) of $f$.

\medskip
By \cite[Lemma 2.6]{BGMP2015}, see also \cite[pages 407--408]{CSMP2016}, the kernel of $f$ and $\hat f$ has the same dimension and hence the following result holds.
\begin{proposition}
\label{adjoint}
If $f(x)\in \tilde\cL_{n,q}$ is a $q^s$-polynomial with maximum kernel, then $\hat f(x)$ is a $q^{n-s}$-polynomial with maximum kernel.
\end{proposition}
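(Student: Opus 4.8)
The plan is to split the claim into two parts: first, that the adjoint of a $q^s$-polynomial of $q^s$-degree $k$ is again a $q^{n-s}$-polynomial and has $q^{n-s}$-degree exactly $k$; and second, that $\dim_{\F_q}\ker\hat f=\dim_{\F_q}\ker f$. The second part is precisely \cite[Lemma 2.6]{BGMP2015} recalled above, so the entire content is the first part, which is a bookkeeping argument on exponents modulo $n$.

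First I would put $f$ in reduced form. Modulo $x^{q^n}-x$ we have $x^{q^{sn}}=x$, so every $q^s$-polynomial can be written as $f(x)=\sum_{j=0}^{k}a_jx^{q^{sj}}$ with $0\le k\le n-1$ and $a_k\ne 0$, where $k$ is its $q^s$-degree. Since $\gcd(s,n)=1$ and $k\le n-1$, the residues $sj\bmod n$ for $j=0,\dots,k$ are pairwise distinct, so the reduced form $f(x)=\sum_{i=0}^{n-1}c_ix^{q^i}$ has $c_i=a_j$ whenever $i\equiv sj\pmod n$ for some $0\le j\le k$ (such $j$ being unique), and $c_i=0$ otherwise. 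Now I would compute $\hat f(x)=\sum_{i=0}^{n-1}c_i^{q^{n-i}}x^{q^{n-i}}$ directly: the surviving terms are those with $i\equiv sj\pmod n$, and for such $i$ the exponent satisfies $n-(sj\bmod n)\equiv -sj\equiv(n-s)j\pmod n$. Hence $\hat f(x)\equiv\sum_{j=0}^{k}b_jx^{q^{(n-s)j}}\pmod{x^{q^n}-x}$ with $b_j=a_j^{q^{\,n-(sj\bmod n)}}$, so that $b_0=a_0$ and $b_k\ne 0$. As $\gcd(n-s,n)=\gcd(s,n)=1$ and the residues $(n-s)j\bmod n$ for $j=0,\dots,k$ are pairwise distinct, no cancellation occurs against a lower term in the reduction, and this exhibits $\hat f$ as a $q^{n-s}$-polynomial of $q^{n-s}$-degree exactly $k$.

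To finish, I would combine this with the invariance of the kernel dimension: $\dim_{\F_q}\ker\hat f=\dim_{\F_q}\ker f=k$, which is exactly the $q^{n-s}$-degree of $\hat f$, so $\hat f$ has maximum kernel. The only step needing any care is the index arithmetic modulo $n$, in particular checking that the leading coefficient $b_k$ does not disappear in the reduction modulo $x^{q^n}-x$; the hypothesis $\gcd(s,n)=1$ is used twice here, to keep the exponents $sj$ distinct and to ensure $\gcd(n-s,n)=1$, so that $q^{n-s}$ really does generate the same Galois group as $q^s$.
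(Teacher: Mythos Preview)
Your proof is correct and follows the same approach as the paper: the paper simply cites \cite[Lemma 2.6]{BGMP2015} for $\dim_{\F_q}\ker\hat f=\dim_{\F_q}\ker f$ and declares the result to follow, leaving the verification that $\hat f$ is a $q^{n-s}$-polynomial of $q^{n-s}$-degree $k$ entirely implicit. Your write-up is more complete in that you actually carry out this exponent bookkeeping modulo $n$, which is a genuine (if routine) step.
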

This will allow us to consider only the $s\leq n/2$ case.

\subsection{The $n=4$ case}\label{n=4}

In this section we determine the linearized polynomials over $\F_{q^4}$ with maximum kernel. Without loss of generality, we can suppose that the leading coefficient of the polynomial is $-1$.

\smallskip

Because of Proposition \ref{adjoint}, we can assume $s=1$. Corollaries \ref{trace} and \ref{bin} cover the cases when the $q$-degree of $f$ is $1$ or $3$ so from now on we suppose $f(x)=a_0x+a_1x^q-x^{q^2}$.
If $a_1=0$ then we can use again Corollary \ref{bin} and we get
$a_0x-x^{q^2}$, with $\N_{q^4/q^2}(a_0)=1$.
Suppose $a_1\neq 0$. By Equation \eqref{reln-2}, we get the conditions
\[ \left\{ \begin{array}{llc} a_0(a_0^{q^2}+a_1^{q^2+q})=1,\\ a_1=-a_0^{q+1}a_1^{q^2}, \end{array} \right.\]
which is equivalent to
\[ \left\{ \begin{array}{llc} \N_{q^4/q}(a_0)=1, \\ a_1^{q+1}=a_0^{q^2+q+1}-a_0^q, \end{array} \right. \]
see (A1) of Section \ref{Appendix}.

\medskip

Here we list the $q$-polynomials of $\cL_{4,q}$ with maximum kernel, up to a non-zero scalar in $\F_{q^4}^*$. Applying the adjoint operation we can obtain the list of $q^3$-polynomials over $\F_{q^4}$ with maximum kernel. In the following table the $q$-degree will be denoted by $k$.

\begin{table}[htp]
\[
\begin{array}{|c|c|c|c| }
\hline
k & \text{polynomial form} & \text{conditions} \\
\hline
3 & \Tr(\lambda x) & \lambda \in \F_{q^4}^* \\ \hline
2 & a_0x-x^{q^{2}} & \N_{q^4/q^2}(a_0)=1 \\ \hline
2 & a_0x+a_1x^{q}-x^{q^{2}} & \left\{ \begin{array}{llc} \N_{q^4/q}(a_0)=1 \\ a_1^{q+1}=a_0^{q^{2}+q+1}-a_0^{q} \end{array} \right. \\ \hline
1 & a_0x-x^{q} & \N_{q^4/q}(a_0)=1 \\ \hline
\end{array}
\]
\caption{Linearized polynomials of $\F_{q^4}$ with maximum kernel with $s=1$}
\label{n=4t}
\end{table}

\subsection{The $n=5$ case}\label{n=5}

In this section we determine the linearized polynomials over $\F_{q^5}$ with maximum kernel. Without loss of generality, we can suppose that the leading coefficient of the polynomial is $-1$.
Because of Proposition \ref{adjoint}, we can assume $s\in\{1,2\}$. Corollaries \ref{trace} and \ref{bin} cover the cases when the $q^s$-degree of $f$ is $1$ or $4$.
\smallskip
First we suppose that $f$ has $q^s$-degree $3$, i.e.
\[ f(x)=a_0x+a_1x^{q^s}+a_2x^{q^{2s}}-x^{q^{3s}}. \]
From \eqref{reln-2}, $f(x)$ has maximum kernel if and only if $a_0$, $a_1$ and $a_2$ satisfy the following system:
\[\left\{ \begin{array}{lllc}
a_1=-a_0^{q^s+1}a_2^{q^{2s}},\\
-a_0^{q^{3s}+q^{2s}+1}a_2^{q^{4s}}+a_2^{q^{2s}+q^s}a_0=1,\\
a_2=-a_0^{q^{2s}+1}+a_2^{q^{3s}+q^{2s}}a_0^{q^{2s}+q^s+1},
\end{array} \right. \]

\noindent which is equivalent to
\[\left\{ \begin{array}{lllc}
\N(a_0)=1,\\
a_1=-a_0^{q^s+1}a_2^{q^{2s}},\\
-a_0^{q^{3s}+q^{2s}+1}a_2^{q^{4s}}+a_0a_2^{q^{2s}+q^s}=1,
\end{array} \right.\]

\noindent see (A2) of Section \ref{Appendix}.

\smallskip

\noindent Suppose now that the $q^s$-degree is $2$, i.e.
\[f(x)=a_0x+a_1x^{q^s}-x^{q^{2s}}.\]
By  Theorem \ref{rec} the polynomial $f(x)$ has maximum kernel if and only if its coefficients satisfy
\[\left\{ \begin{array}{llc} a_0(a_0^{q^{2s}}a_1^{q^{3s}}+a_1^{q^s}(a_0^{q^{3s}}+a_1^{q^{3s}+q^{2s}}))=1, \\ a_0^{q^s+1}(a_0^{q^{3s}}+a_1^{q^{3s}+q^{2s}})+a_1=0, \end{array} \right. \]
which is equivalent to
\[\left\{ \begin{array}{llc} \N(a_0)=-1, \\a_0^{q^s}+a_1^{q^s+1}=a_0^{q^{2s}+q^s+1}a_1^{q^{3s}}, \end{array} \right.\]
see (A3) of Section \ref{Appendix}.

\medskip

Here we list the $q^s$-polynomials, $s\in\{1,2\}$ of $\cL_{5,q}$ with maximum kernel, up to a non-zero scalar in $\F_{q^5}^*$. Applying the adjoint operation we can obtain the list of $q^t$-polynomials, $t\in\{3,4\}$, over $\F_{q^5}$ with maximum kernel. As before, the $q^s$-degree is denoted by $k$.

\begin{table}[htp]
\[
\begin{array}{ |c|c|c|c| }

\hline
$k$ & \text{polynomial form} & \text{conditions} \\
\hline
4 &  \Tr(\lambda x) & \lambda \in \F_{q^5}^* \\ \hline
3 & a_0x+a_1x^{q^s}+a_2x^{q^{2s}}-x^{q^{3s}} & \left\{ \begin{array}{lllc}
\N(a_0)=1\\
a_1=-a_0^{q^s+1}a_2^{q^{2s}}\\
-a_0^{q^{3s}+q^{2s}+1}a_2^{q^{4s}}+a_0a_2^{q^{2s}+q^s}=1
\end{array} \right. \\ \hline
2 & a_0x+a_1x^{q^s}-x^{q^{2s}} & \left\{ \begin{array}{llc} \N(a_0)=-1 \\ a_1^{q^s+1}+a_0^{q^s}=a_1^{q^{3s}}a_0^{q^{2s}+q^s+1} \end{array} \right. \\ \hline
1 & a_0x-x^{q^s} & \N(a_0)=1 \\ \hline
\end{array}
\]
\caption{Linearized polynomials of $\F_{q^5}$ with maximum kernel with $s\in \{1,2\}$}
\label{n=5t}
\end{table}

\subsection{The $n=6$ case}\label{n=6}

In this section we determine the linearized polynomials over $\F_{q^6}$ with maximum kernel. Without loss of generality, we can suppose that the leading coefficient of the polynomial is $-1$.
Because of Proposition \ref{adjoint}, we can assume $s=1$. Corollaries \ref{trace} and \ref{bin} cover the cases when the $q$-degree of $f$ is $1$ or $5$. As before, denote by $k$ the $q^s$-degree of $f$.
\smallskip

\smallskip

We first consider the case $k=2$, i.e. $f(x)=a_0x+a_1x^{q^s}-x^{q^{2s}}$. By Theorem \ref{rec}, $f(x)$ has maximum kernel if and only if the coefficients satisfy

\[ \left\{ \begin{array}{lllc} \N(a_0)=1, \\ (a_0^{q}+a_1^{q+1})^{q^{3}}=a_0^{q^{5}+q^{4}+q^{3}}(a_0^{q}+a_1^{q+1}), \\
a_1^{q^{4}}a_0^{q^{3}}+a_1^{q^{2}}(a_0^{q^{4}}+a_1^{q^{4}+q^{3}})=-\frac{a_1}{a_0^{q+1}},\end{array} \right. \]

\noindent see (A4) of Section \ref{Appendix}.

\smallskip

If $k=3$, then $f(x)=a_0x+a_1x^{q^s}+a_2x^{q^{2s}}-x^{q^{3s}}$, and by Theorem \ref{rec} it has maximum kernel if and only the coefficients fulfill

\[\left\{ \begin{array}{llllc}
\N(a_0)=1,\\
a_0^{q^{3}+q+1}+a_2^{q^{3}}a_1^{q^{2}}a_0^{q+1}-a_2^{q}a_1=a_0^{q},\\
a_2^{q+1}= -a_0^{q^{3}+q^{2}+q+1}a_1^{q^{4}}-a_1^{q},\\
a_1^{q+1}= a_2a_0^{q}+a_0^{q^{2}+q+1}a_2^{q^{3}},
\end{array}\right.\]

\noindent see (A5) of Section \ref{Appendix}. Note that $a_1=0$ if and only if $a_2=0$ and in this case we get the trace over $\F_{q^3}$.

\smallskip

Finally, let $k=4$. Then the polynomial $f(x)=a_0x+a_1x^{q^s}+a_2x^{q^{2s}}+a_3x^{q^{3s}}-x^{q^{4s}}$ has maximum kernel if and only if the coefficients satisfy

\[ \left\{ \begin{array}{lllllc}
\N(a_0)=1,\\
a_0(-a_0^{q^{4}+q^{2}}+a_3^{q^{5}+q^{4}}a_0^{q^{4}+q^{3}+q^{2}}+a_3^{q^{2}+q})=1,\\
a_1=-a_0^{q+1}a_3^{q^{2}},\\
a_2=-a_0^{q^{2}+1}+a_3^{q^{3}+q^{2}}a_0^{q^{2}+q+1},\\
a_3=a_3^{q^{4}}a_0^{q^{3}+q^{2}+1}+a_3^{q^{2}}a_0^{q^{3}+q+1}-a_0^{q^{3}+q^{2}+q+1}a_3^{q^{4}+q^{3}+q^{2}},
\end{array} \right.\]

\noindent see (A6) of Section \ref{Appendix}.

\medskip

Here we list the $q$-polynomials of $\cL_{6,q}$ with maximum kernel, up to a non-zero scalar in $\F_{q^6}^*$. Applying the adjoint operation we can obtain the list of $q^5$-polynomials over $\F_{q^6}$ with maximum kernel.

\begin{small}
\begin{sidewaystable}
\centering
\caption{Linearized polynomials of $\F_{q^6}$ with maximum kernel with $s=1$}
\begin{tabular}{ |c|c|c| }
\hline
$k$ & \text{polynomial form} & \text{conditions} \\
\hline
$5$ & $\Tr_{q^6/q}(\lambda x)$ & $\lambda \in \F_{q^6}^*$ \\
\hline
$4$ & $a_0x+a_1x^{q}+a_2x^{q^{2}}+a_3x^{q^{3}}-x^{q^{4}}$ &
$\left\{ \begin{array}{llllllc}
a_1\neq 0\\
\N(a_0)=1\\
a_0(-a_0^{q^{4}+q^{2}}+a_3^{q^{5}+q^{4}}a_0^{q^{4}+q^{3}+q^{2}}+a_3^{q^{2}+q})=1\\
a_1=-a_0^{q+1}a_3^{q^{2}}\\
a_2=-a_0^{q^{2}+1}+a_3^{q^{3}+q^{2}}a_0^{q^{2}+q+1}\\
a_3=a_3^{q^{4}}a_0^{q^{3}+q^{2}+1}+a_3^{q^{2}}a_0^{q^{3}+q+1}-a_0^{q^{3}+q^{2}+q+1}a_3^{q^{4}+q^{3}+q^{2}}
\end{array} \right. $ \\
\hline
$4$ & $\Tr_{q^6/q^2}(\lambda x)$ & $\lambda \in \F_{q^6}^*$ \\
\hline
$3$ & $a_0x+a_1x^{q}+a_2x^{q^{2}}-x^{q^{3}}$ & $\left\{ \begin{array}{llllc}
\N(a_0)=1\\
a_0^{q^{3}+q+1}+a_2^{q^{3}}a_1^{q^{2}}a_0^{q+1}-a_2^{q}a_1=a_0^{q}\\
a_2^{q+1}= -a_0^{q^{3}+q^{2}+q+1}a_1^{q^{4}}-a_1^{q}\\
a_1^{q+1}= a_2a_0^{q}+a_0^{q^{2}+q+1}a_2^{q^{3}}
\end{array}\right.$ \\
\hline
$3$ & $\Tr_{q^6/q^3}(\lambda x)$ & $\lambda \in \F_{q^6}^*$ \\
\hline
$2$ & $a_0x+a_1x^{q}-x^{q^{2}}$ & $\left\{ \begin{array}{llllc} a_1 \neq 0\\
\N(a_0)=1 \\
(a_0^{q}+a_1^{q+1})^{q^{3}}=a_0^{q^{5}+q^{4}+q^{3}}(a_0^{q}+a_1^{q+1}) \\
a_1^{q^{4}}a_0^{q^{3}}+a_1^{q^{2}}(a_0^{q^{4}}+a_1^{q^{4}+q^{3}})=-\frac{a_1}{a_0^{q+1}}\end{array} \right.$\\
\hline
$2$ & $a_0x-x^{q^{2}}$ & $\N_{q^6/q^2}(a_0)=1$ \\ \hline
$1$ & $a_0x-x^{q}$ & $\N_{q^6/q}(a_0)=1$ \\ \hline
\end{tabular}\label{n=6t}
\end{sidewaystable}
\end{small}

\newpage

\section{Application}

As an application of Theorem \ref{main} we are able to prove the following result on the splitting field of $q$-polynomials.

\begin{theorem}
\label{appl}
Let $f(x)=a_0x+a_1x^{q}+\cdots+a_{k-1}x^{q^{k-1}}-x^{q^{k}}\in \F_{q^n}[x]$ with $a_0\neq 0$ and let $A$ be defined as in \eqref{matrix}.
Then the splitting field of $f(x)$ is $\F_{q^{nm}}$ where $m$ is the (multiplicative) order of the matrix $B:=A A^{q}\cdots A^{q^{n-1}}$.
\end{theorem}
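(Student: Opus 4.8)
The plan is to reduce the statement to a statement about a single semilinear map on $\F_{q^n}^k$, exactly as in the proof of Theorem \ref{main}. First I would work over $\Lb = \F_{q^{nm}}$ (or, more cautiously, over the algebraic closure $\K$) and consider the $\F_q$-semilinear map $\tau$ of $\K^k$ given by $\tau(\mathbf v) = A \mathbf v^q$, whose $\F_{q^n}$-linear representing matrix with respect to the canonical basis is $A$. As computed in the discussion preceding Theorem \ref{rec}, the orbit vectors $\mathbf e_0^{\tau^i}$ have coordinates $(Q_{0,i},\ldots,Q_{k-1,i})$, and the roots of $f$ in any field $\Lb$ with $\F_{q^n}\le \Lb\le \K$ are in bijection (via $\mathbf v \mapsto v_0 = $ first coordinate of a fixed vector, equivalently via $\mathbf u = \phi(\mathbf e_0)$ as in the proof of Theorem \ref{main}) with the fixed vectors of $\tau$ lying in $\Lb^k$. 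Concretely, $\lambda\in\K$ is a root of $f$ iff the vector $(\lambda,\lambda^q,\ldots,\lambda^{q^{k-1}})$ — or rather its image under the appropriate change of basis — is fixed by $\tau$; the key point I would isolate is that the root field of $f$ equals the smallest field containing all first coordinates of vectors in $\mathrm{Fix}(\tau)\subseteq\K^k$.

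Next I would use the identity, already established in the proof of Theorem \ref{fixsemi}, that $\tau^n$ is the $\F_{q^n}$-linear map with matrix $A A^q\cdots A^{q^{n-1}} = B$ (the $q$-powers distribute through the intertwined $A$'s and the companion automorphism of $\tau^n$ is $x\mapsto x^{q^n} = \mathrm{id}$ on $\F_{q^n}$, so $\tau^n$ is genuinely $\F_{q^n}$-linear with matrix $B$). Therefore $\mathrm{Fix}(\tau)\subseteq \F_{q^{nm}}^k$ precisely when $\tau^{nm}$ fixes these vectors, and since $\tau^{nm} = (\tau^n)^m$ has matrix $B B^{q^n}\cdots B^{q^{n(m-1)}}$ — but $B\in\F_{q^n}^{k\times k}$ so each $B^{q^{nj}} = B$, giving matrix $B^m$ — the condition becomes $B^m = I_k$, i.e. $m = \mathrm{ord}(B)$. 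More carefully: a fixed vector $\mathbf v$ of $\tau$ satisfies $\mathbf v^{q^{nj}} = (\tau^n)^j(\mathbf v) = B^{(j)}\mathbf v$ where $B^{(1)} = B$ and $B^{(j)} = B\cdot (B^{(j-1)})^{q^n}\cdot(\text{tracking powers})$; since $B$ has entries in $\F_{q^n}$ this collapses to $B^j$, so $\mathbf v$ has coordinates in $\F_{q^{nj}}$ iff $B^j\mathbf v = \mathbf v$. Running this over all of $\mathrm{Fix}(\tau)$, which spans $\K^k$ over $\K$ by the Remark after Theorem \ref{fixsemi} (applied over $\K$), forces $B^j = I_k$ on a spanning set, hence $B^j = I_k$.

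The remaining step is to pass from "$\mathrm{Fix}(\tau)\subseteq\F_{q^{nm}}^k$" to "the splitting field of $f$ is exactly $\F_{q^{nm}}$". For the inclusion: when $B^m = I_k$, Theorem \ref{fixsemi} applied to $\tau$ as a semilinear map of $V(k,q^{nm})$ (it has order dividing $nm$; its companion automorphism $x\mapsto x^q$ fixes $\F_q$) shows $\mathrm{Fix}(\tau)$ is a $k$-dimensional $\F_q$-subspace of $\F_{q^{nm}}^k$ spanning $\F_{q^{nm}}^k$ over $\F_{q^{nm}}$, so all $q^k$ roots of $f$ lie in $\F_{q^{nm}}$. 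For minimality: if the splitting field were $\F_{q^{n\ell}}$ with $\ell\mid m$, $\ell<m$, then every root lies in $\F_{q^{n\ell}}$, so every vector of $\mathrm{Fix}(\tau)$ lies in $\F_{q^{n\ell}}^k$ (the coordinates are $q$-power images of roots), hence $B^\ell$ fixes a $\K$-spanning set and $B^\ell = I_k$, contradicting $\mathrm{ord}(B) = m$. One should also note the splitting field is always of the form $\F_{q^{n\ell}}$ for some $\ell$: the Frobenius $x\mapsto x^{q^n}$ permutes the roots (it fixes the coefficients $a_i\in\F_{q^n}$), hence the splitting field contains $\F_{q^n}$ and is generated over it by the roots.

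The main obstacle I anticipate is bookkeeping the interaction between the semilinear iteration and the Frobenius twists on the entries of $B$ — i.e. making rigorous that $(\tau^n)^j$ has matrix $B^j$ rather than some twisted product $B\cdot B^{q^n}\cdots$. This is clean because $B$ has entries in $\F_{q^n}$ and $x\mapsto x^{q^n}$ is the identity there, but it must be stated explicitly; the cheapest route is to observe directly that $\tau^n$ is $\F_{q^n}$-linear (its companion automorphism is $x\mapsto x^{q^n}=\mathrm{id}$) with matrix $B$, so ordinary matrix powers apply. A secondary subtlety is making sure the correspondence "roots of $f$ $\leftrightarrow$ fixed vectors of $\tau$" is a bijection over every intermediate field simultaneously, which follows from the displayed computation of $Q_{j,i}$ together with the fact that $\mathbf v\in\mathrm{Fix}(\tau)$ iff $v_0$ is a root of the eliminated equation, exactly as in the proof of Theorem \ref{fixsemi}.
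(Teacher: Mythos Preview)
Your approach is correct and rests on the same idea as the paper's proof: since $A\in\F_{q^n}^{k\times k}$ one has $A A^{q}\cdots A^{q^{nj-1}}=B^{j}$, and this equals $I_k$ exactly when $f$ has maximum kernel over $\F_{q^{nj}}$. The paper obtains this much more economically by invoking Theorem~\ref{main} directly over $\F_{q^{nj}}$ (once with $j$ the putative splitting-field exponent, once with $j=\mathrm{ord}(B)$) rather than unpacking the semilinear map $\tau$, the root--fixed-vector correspondence, and Theorem~\ref{fixsemi}; your longer route works, but everything you re-derive is already packaged in Theorem~\ref{main}.
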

\begin{proof}
The derivative of $f(x)$ is non-zero and hence $f(x)$ has $q^k$ distinct roots in some algebraic extension of $\F_{q^n}$.
Suppose that $\F_{q^{nm}}$ is the splitting field of $f(x)$ and let $t$ denote the order of $B$.
Then the kernel of the $\F_q$-linear $\F_{q^{nm}}\rightarrow \F_{q^{nm}}$ map defined as $x \mapsto f(x)$ has dimension $k$ over $\F_q$ and hence by Theorem \ref{main} we have
\[A A^{q}\cdots A^{q^{nm-1}}=I_k.\]
Since the coefficients of $A$ are in $\F_{q^n}$, this is equivalent to $B^m=I_k$ and hence $t \mid m$.
On the other hand
\[B^t=A A^{q}\cdots A^{q^{nt-1}}=I_k\]
and hence again by Theorem \ref{main} the kernel of the $\F_q$-linear
$\F_{q^{nt}}\rightarrow \F_{q^{nt}}$ map defined as $x \mapsto f(x)$ has dimension $k$ over $\F_q$. It follows that $\F_{q^{nm}}$ is a subfield of $\F_{q^{nt}}$ from which $m \mid t$.
\end{proof}

A further application of Theorem \ref{main} is the following.

\begin{theorem}
Let $n,m,s$ and $t$ be positive integers such that $\gcd(s,nm)=\gcd(t,nm)=1$ and $s \equiv t \pmod{m}$. Let  $f(x)=a_0x+a_1x^{q^s}+\cdots+a_{k-1}x^{q^{s(k-1)}}-x^{q^{sk}}$ and $g(x)=a_0x+a_1x^{q^t}+\cdots+a_{k-1}x^{q^{t(k-1)}}-x^{q^{tk}}$, where $a_0,a_1,\ldots,a_{k-1}\in\F_{q^m}$.
The kernel of $f(x)$ considered as a linear transformation of $\F_{q^{nm}}$ has dimension $k$ if and only if the kernel of $g(x)$ considered as a linear transformation of $\F_{q^{nm}}$ has dimension $k$.
\end{theorem}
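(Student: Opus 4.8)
The plan is to reduce both statements to the matrix condition of Theorem~\ref{main}. Since $a_0,\ldots,a_{k-1}\in\F_{q^m}$, the companion matrix $A$ in \eqref{matrix} has all its entries in $\F_{q^m}$; in particular $A^{q^m}=A$, so applying the Frobenius $x\mapsto x^{q}$ to the entries of $A$ is a process of period dividing $m$. Consequently, for \emph{any} exponent $e$ with $\gcd(e,nm)=1$, the product $A A^{q^e} A^{q^{2e}}\cdots A^{q^{e(nm-1)}}$ depends only on the residues $0, e, 2e, \ldots, e(nm-1)$ taken modulo $m$. Because $\gcd(e,nm)=1$ forces $\gcd(e,m)=1$, the map $i\mapsto ei \pmod m$ is a bijection of $\mathbb Z/m\mathbb Z$, and as $i$ runs over $0,1,\ldots,nm-1$ this residue hits each class of $\mathbb Z/m\mathbb Z$ exactly $n$ times, in a cyclic pattern determined by $e \bmod m$.

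The key step is then the observation that the cyclic \emph{order} in which these residues appear is governed entirely by $e \bmod m$. Writing $r := s \bmod m = t \bmod m$ (using the hypothesis $s\equiv t\pmod m$), the sequence $\bigl(si \bmod m\bigr)_{i=0}^{nm-1}$ and the sequence $\bigl(ti \bmod m\bigr)_{i=0}^{nm-1}$ are literally the same sequence $\bigl(ri \bmod m\bigr)_{i=0}^{nm-1}$. Since $A^{q^{si}}$ depends on $i$ only through $si \bmod m$, and likewise for $g$, we conclude
\[
A A^{q^s} A^{q^{2s}}\cdots A^{q^{s(nm-1)}} \;=\; A A^{q^t} A^{q^{2t}}\cdots A^{q^{t(nm-1)}}.
\]
Now apply Theorem~\ref{main} over the field $\F_{q^{nm}}$ (noting $\gcd(s,nm)=\gcd(t,nm)=1$, so $f$ and $g$ are genuine $q^s$- and $q^t$-polynomials there): the left side equals $I_k$ precisely when $\dim_{\F_q}\ker f = k$ in $\F_{q^{nm}}$, and the right side equals $I_k$ precisely when $\dim_{\F_q}\ker g = k$ in $\F_{q^{nm}}$. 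Since the two products coincide, the two kernel-dimension conditions are equivalent.

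The only point requiring care—and what I expect to be the main obstacle—is verifying that $A^{q^{si}}$ really does depend on $i$ only through $si \bmod m$, i.e. that $A^{q^{m}}=A$ and hence $A^{q^{a}}=A^{q^{b}}$ whenever $a\equiv b\pmod m$; this is immediate from $a_j\in\F_{q^m}$ but should be stated explicitly. A secondary bookkeeping point is that Theorem~\ref{main} is phrased for the base field being $\F_{q^n}$ with the polynomial's coefficients in that field; here we invoke it with $n$ replaced by $nm$, which is legitimate since $a_0,\ldots,a_{k-1}\in\F_{q^m}\subseteq\F_{q^{nm}}$ and $\gcd(s,nm)=1$ (resp.\ $\gcd(t,nm)=1$) guarantees that $x\mapsto x^{q^s}$ (resp.\ $x\mapsto x^{q^t}$) generates $\mathrm{Gal}(\F_{q^{nm}}/\F_q)$, so the hypotheses of Theorem~\ref{main} are met.
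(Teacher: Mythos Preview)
Your proposal is correct and follows essentially the same route as the paper: both arguments observe that the companion matrix $A$ lies in $\F_{q^m}^{k\times k}$, so $A^{q^{si}}=A^{q^{ti}}$ for every $i$ (since $s\equiv t\pmod m$), whence the two $nm$-fold products in Theorem~\ref{main} coincide, and the result follows. Your write-up is more explicit about why the factor-by-factor equality holds, but the underlying idea is identical.
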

\begin{proof}
Denote by $A$ the matrix associated with $f(x)$ as in \eqref{matrix}. By hypothesis, $A \in \F_{q^m}^{k\times k}$ and it is the same as the matrix associated with $g(x)$.
By Theorem \ref{main} the kernel of $f(x)$, considered as a linear transformation of $\F_{q^{nm}}$, has dimension $k$ if and only if
\[AA^{q^s}\cdots A^{q^{s(nm-1)}}=I_k.\]
Since $s \equiv t \pmod{m}$, we have
\[AA^{q^s}\cdots A^{q^{s(nm-1)}}=AA^{q^t}\cdots A^{q^{t(nm-1)}}=I_k, \]
and, again by Theorem \ref{main}, this holds if and only if the kernel of $g(x)$, considered as a linear transformation of $\F_{q^{nm}}$, has dimension $k$.
\end{proof}

\subsection*{Addendum}

During the ``Combinatorics 2018" conference, the fourth author presented the results of this paper in the talk entitled ``On $q$-polynomials with maximum kernel". In the same conference John Sheekey  presented a joint work with Gary McGuire \cite{McGS} in his talk entitled ``Ranks of Linearized Polynomials and Roots of Projective Polynomials". It turned out that, independently from the authors of the present paper, they also obtained similar results.

\newpage

\section{Appendix}\label{Appendix}

In this section we  develop some calculations regarding the relations on the coefficients of a linearized polynomials with maximum kernel presented in Sections \ref{n=4}, \ref{n=5} and \ref{n=6}, see also \cite{PhDthesisFerd}.

\begin{itemize}
  \item [(A1)] By Equation \eqref{star} with $n=4$, $s=1$ and $k=2$, we get the conditions
        \[ \Sigma \colon \left\{ \begin{array}{llc} a_0(a_0^{q^{2}}+a_1^{q^{2}+q})=1,\\ a_1=-a_0^{q+1}a_1^{q^{2}}. \end{array} \right.\]
        By Corollary \ref{norm}, the system $\Sigma$ is equivalent to the following system
        \[ \Sigma' \colon \left\{ \begin{array}{lllc} \N_{q^4/q}(a_0)=1,\\ a_0(a_0^{q^{2}}+a_1^{q^{2}+q})=1,\\ a_1=-a_0^{q+1}a_1^{q^{2}}, \end{array} \right. \]
        which can be rewritten as follows
        \[ \Sigma' \colon \left\{ \begin{array}{lllc} \N_{q^4/q}(a_0)=1,\\ a_1^{q^{2}-1}=-\frac{1}{a_0^{q+1}},\\ a_1^{q+1}=a_0^{q^{2}+q+1}-a_0^{q}. \end{array} \right. \]
        Now consider the system
        \[\Sigma^*\colon \left\{ \begin{array}{llc} \N_{q^4/q}(a_0)=1, \\ a_1^{q+1}=a_0^{q^{2}+q+1}-a_0^{q}. \end{array} \right. \]
        Clearly, $S(\Sigma') \subseteq S(\Sigma^*)$, where $S(\Sigma')$ and $S(\Sigma^*)$ denote the set of solutions of $\Sigma'$ and $\Sigma^*$, respectively.
        Let $(a_0,a_1) \in S(\Sigma^*)$, then by using the norm condition on $a_0$
        \[ a_1^{q^{2}-1}=\left( \frac{1}{a_0^{q^{3}}} -a_0^{q} \right)^{q-1}=\left(\frac{1-a_0^{q+q^{3}}}{a_0^{q^{3}}}\right)^{q-1}=\]
        \[=\frac{1-a_0^{1+q^{2}}}{1-a_0^{q+q^{3}}}a_0^{q^{3}-1}=\frac{1-\frac{1}{a_0^{q+q^{3}}}}{1-a_0^{q+q^{3}}}a_0^{q^{3}-1}=-\frac{1}{a_0^{q+1}}, \]
        i.e. $(a_0,a_1) \in S(\Sigma')$ and hence $S(\Sigma^*)=S(\Sigma')=S(\Sigma)$.
  \item [(A2)] From \eqref{star} with $n=5$, $\gcd(s,5)=1$ and $k=3$, we get the following conditions:
        \[\Sigma\colon \left\{ \begin{array}{lllc}
        a_0(a_1^{q^{2s}}+a_2^{q^{2s}+q^s})=1,\\
        a_1=-a_0^{q^s+1}a_2^{q^{2s}},\\
        a_2=-a_0^{q^{2s}+1}-a_2^{q^{2s}}a_1^{q^s}a_0.
        \end{array} \right. \]
        By Corollary \ref{norm}, $\Sigma$ is equivalent to
        \[ \Sigma'\colon \left\{ \begin{array}{llllc}
        \N_{q^5/q}(a_0)=1,\\
        a_0(a_1^{q^{2s}}+a_2^{q^{2s}+q^s})=1,\\
        a_1=-a_0^{q^s+1}a_2^{q^{2s}},\\
        a_2=-a_0^{q^{2s}+1}-a_2^{q^{2s}}a_1^{q^s}a_0.
        \end{array} \right.  \]
        which can be rewritten as follows
        \[ \Sigma'\colon \left\{ \begin{array}{llllc}
        \N_{q^5/q}(a_0)=1,\\
        a_1=-a_0^{q^s+1}a_2^{q^{2s}},\\
        -a_0^{q^{3s}+q^{2s}+1}a_2^{q^{4s}}+a_2^{q^{2s}+q^s}a_0=1,\\
        a_2=-a_0^{q^{2s}+1}+a_2^{q^{3s}+q^{2s}}a_0^{q^{2s}+q^s+1}.
        \end{array} \right.  \]
        By raising the third equation to $q^s$ and multiplying by $a_0^{q^{2s}+1}$, since $\N(a_0)=1$, we get the fourth equation.
        Therefore $\Sigma'$, and hence $\Sigma$, is equivalent to
        \[\left\{ \begin{array}{lllc}
        \N_{q^5/q}(a_0)=1,\\
        a_1=-a_0^{q^s+1}a_2^{q^{2s}},\\
        -a_0^{q^{3s}+q^{2s}+1}a_2^{q^{4s}}+a_0a_2^{q^{2s}+q^s}=1.
        \end{array} \right.\]
  \item [(A3)] Applying Theorem \ref{rec} with $n=5$ and $k=2$, we get that the polynomial $f(x)$ has maximum kernel if and only if its coefficients satisfy
        \[ \Sigma \colon \left\{ \begin{array}{llc} Q_{0,5}=a_0(a_0^{q^{2s}}a_1^{q^{3s}}+a_1^{q^s}(a_0^{q^{3s}}+a_1^{q^{3s}+q^{2s}}))=1 ,\\ Q_{1,5}=a_0^{q^s}(a_0^{q^{3s}}+a_1^{q^{3s}+q^{2s}})+a_1(a_0^{q^{2s}}a_1^{q^{3s}}+a_1^{q^s} (a_0^{q^{3s}}+a_1^{q^{3s}+q^{2s}}))=0, \end{array} \right. \]
        which is equivalent to
        \[\left\{ \begin{array}{lllc} \N_{q^5/q}(a_0)=-1,\\ a_0(a_0^{q^{2s}}a_1^{q^{3s}}+a_1^{q^s}(a_0^{q^{3s}}+a_1^{q^{3s}+q^{2s}}))=1, \\ a_0^{q^s+1}(a_0^{q^{3s}}+a_1^{q^{3s}+q^{2s}})+a_1=0, \end{array} \right. \]
        because of Corollary \ref{norm} and since $\displaystyle a_0^{q^{2s}}a_1^{q^{3s}}+a_1^{q^s}(a_0^{q^{3s}}+a_1^{q^{3s}+q^{2s}})=\frac{1}{a_0}$.
        The above system can be rewritten as follows
        \[\left\{ \begin{array}{lllc} \N_{q^5/q}(a_0)=-1,\\ a_0^{q^{3s}}+a_1^{q^{3s}+q^{2s}}=-\frac{a_1}{a_0^{q^s+1}},\\ a_1^{q^{3s}}a_0^{q^{2s}+q^s+1}-a_1^{q^s+1}=a_0^{q^s}, \end{array} \right. \]
        which is equivalent to
        \[\left\{ \begin{array}{lllc} \N_{q^5/q}(a_0)=-1,\\ a_1^{q^s+1}+a_0^{q^s}=a_1^{q^{3s}}a_0^{q^{2s}+q^s+1},\\ a_1a_0^{q^{4s}+q^{3s}+q^{2s}}=-\frac{a_1}{a_0^{q^s+1}}. \end{array} \right. \]
        If the first and the second equations are satisfied, clearly also the last one is fulfilled, hence $\Sigma$ is equivalent to the following system
        \[\left\{ \begin{array}{llc} \N_{q^5/q}(a_0)=-1, \\ a_1^{q^s+1}+a_0^{q^s}=a_1^{q^{3s}}a_0^{q^{2s}+q^s+1}. \end{array} \right.\]
  \item [(A4)] By Theorem \ref{rec}, with $n=6$, $s=1$ and $k=2$, we get
        \begin{small}
        \[ \left\{ \begin{array}{llc} Q_{0,6}=a_0(a_0^{q^{2}}(a_0^{q^{4}}+a_1^{q^{4}+q^{3}})+a_1^{q}(a_0^{q^{3}}a_1^{q^{4}}+a_1^{q^{2}}(a_0^{q^{4}}+a_1^{q^{4}+q^{3}})))=1, \\
        Q_{1,6}=a_0^{q^{2}}a_1(a_0^{q^{4}}+a_1^{q^{4}+q^{3}})+(a_1^{q+1}+a_0^{q})(a_0^{q^{3}}a_1^{q^{4}}+a_1^{q^{2}}(a_0^{q^{4}}+a_1^{q^{4}+q^{3}}))=0,
        \end{array} \right. \]
        \end{small}
        which is equivalent to
        \[ \left\{ \begin{array}{llc} a_0^{q^{2}}(a_0^{q^{4}}+a_1^{q^{4}+q^{3}})+a_1^{q}(a_0^{q^{3}}a_1^{q^{4}}+a_1^{q^{2}}(a_0^{q^{4}}+a_1^{q^{4}+q^{3}}))=\frac{1}{a_0}, \\
        \frac{a_1}{a_0}+a_0^{q}(a_0^{q^{3}}a_1^{q^{4}}+a_1^{q^{2}}(a_0^{q^{4}}+a_1^{q^{4}+q^{3}}))=0,\end{array} \right. \]
        i.e.
        \[ \left\{ \begin{array}{llc} a_0^{q^{2}}(a_0^{q^{4}}+a_1^{q^{4}+q^{3}})+a_1^{q}(a_0^{q^{3}}a_1^{q^{4}}+a_1^{q^{2}}(a_0^{q^{4}}+a_1^{q^{4}+q^{3}}))=\frac{1}{a_0}, \\
        a_0^{q^{3}}a_1^{q^{4}}+a_1^{q^{2}}(a_0^{q^{4}}+a_1^{q^{4}+q^{3}})=-\frac{a_1}{a_0^{q+1}}.\end{array} \right. \]
        By Corollary \ref{norm}, the previous system is equivalent to
        \[ \left\{ \begin{array}{lllc} \N(a_0)=1, \\ a_0^{q^{2}}(a_0^{q^{4}}+a_1^{q^{4}+q^{3}})+a_1^{q}(a_0^{q^{3}}a_1^{q^{4}}+a_1^{q^{2}}(a_0^{q^{4}}+a_1^{q^{4}+q^{3}}))=\frac{1}{a_0}, \\
        a_0^{q^{3}}a_1^{q^{4}}+a_1^{q^{2}}(a_0^{q^{4}}+a_1^{q^{4}+q^{3}})=-\frac{a_1}{a_0^{q+1}},\end{array} \right. \]
        which is equivalent to
        \[ \left\{ \begin{array}{lllc} \N(a_0)=1, \\ a_0^{q^{2}}(a_0^{q}+a_1^{q+1})^{q^{3}}-\frac{a_1^{q+1}}{a_0^{q+1}}=\frac{1}{a_0}, \\
        a_0^{q^{3}}a_1^{q^{4}}+a_1^{q^{2}}(a_0^{q^{4}}+a_1^{q^{4}+q^{3}})=-\frac{a_1}{a_0^{q+1}},\end{array} \right. \]
        hence it is equivalent to
        \[ \left\{ \begin{array}{lllc} \N(a_0)=1, \\ (a_0^{q}+a_1^{q+1})^{q^{3}}=a_0^{q^{5}+q^{4}+q^{3}}(a_0^{q}+a_1^{q+1}), \\
        a_1^{q^{4}}a_0^{q^{3}}+a_1^{q^{2}}(a_0^{q^{4}}+a_1^{q^{4}+q^{3}})=-\frac{a_1}{a_0^{q+1}}.\end{array} \right. \]
  \item [(A5)] By Theorem \ref{rec} with $n=6$, $s=1$ and $k=3$, we get
        \[ \left\{ \begin{array}{lllc}
        a_0Q_{2,5}^{q}=1,\\
        Q_{0,5}^{q}+a_1Q_{2,5}^{q}=0,\\
        Q_{1,5}^{q}+a_2Q_{2,5}^{q}=0,
        \end{array}\right.\]
        where
        \[ \begin{array}{lllc}
        Q_{0,5}=a_0(a_1^{q^{2}}+a_2^{q^{2}+q}),\\
        Q_{1,5}=a_0^{q}a_2^{q^{2}}+a_1(a_1^{q^{2}}+a_2^{q^{2}+q}),\\
        Q_{2,5}=a_0^{q^{2}}+a_2^{q^{2}}a_1^{q}+a_2(a_1^{q^{2}}+a_2^{q^{2}+q}),
        \end{array}\]
        hence we obtain the following system
        \[\left\{ \begin{array}{lllc}
        a_0 (a_0^{q^{3}}+a_2^{q^{3}}a_1^{q^{2}}+a_2^{q}(a_1^{q^{3}}+a_2^{q^{3}+q^{2}}))=1,\\
        \frac{a_1}{a_0}+a_0^{q}(a_1^{q^{3}}+a_2^{q^{3}+q^{2}})=0,\\
        \frac{a_2}{a_0}+a_2^{q^{3}}a_0^{q^{2}}+a_1^{q}(a_1^{q^{3}}+a_2^{q^{3}+q^{2}})=0.
        \end{array}\right.\]
        By Corollary \ref{norm} it is equivalent to
        \[\left\{ \begin{array}{llllc}
        \N(a_0)=1,\\
        a_0 (a_0^{q^{3}}+a_2^{q^{3}}a_1^{q^{2}}+a_2^{q}(a_1^{q^{3}}+a_2^{q^{3}+q^{2}}))=1,\\
        a_1^{q^{3}}+a_2^{q^{3}+q^{2}}=-\frac{a_1}{a_0^{1+q}},\\
        \frac{a_2}{a_0}+a_2^{q^{3}}a_0^{q^{2}}+a_1^{q}(a_1^{q^{3}}+a_2^{q^{3}+q^{2}})=0,
        \end{array}\right.\]
        by substituting the third equation into the others we get
        \[\left\{ \begin{array}{llllc}
        \N(a_0)=1,\\
        a_0 (a_0^{q^{3}}+a_2^{q^{3}}a_1^{q^{2}}-\frac{a_2^{q}a_1}{a_0^{1+q}})=1,\\
        a_1^{q^{3}}+a_2^{q^{3}+q^{2}}=-\frac{a_1}{a_0^{1+q}},\\
        \frac{a_2}{a_0}+a_2^{q^{3}}a_0^{q^{2}}-\frac{a_1^{q+1}}{a_0^{1+q}}=0,
        \end{array}\right.\]
        i.e.
        \[\left\{ \begin{array}{llllc}
        \N(a_0)=1,\\
        a_0^{q^{3}+q+1}+a_2^{q^{3}}a_1^{q^{2}}a_0^{q+1}-a_2^{q}a_1=a_0^{q},\\
        a_2^{q+1}= -a_0^{q^{3}+q^{2}+q+1}a_1^{q^{4}}-a_1^{q},\\
        a_1^{q+1}= a_2a_0^{q}+a_0^{q^{2}+q+1}a_2^{q^{3}}.
        \end{array}\right.\]
  \item [(A6)] Equations \eqref{star} with $n=6$, $s=1$ and $k=4$ are
        \[
        \left\{ \begin{array}{llllc}
        a_0(a_2^{q^{2}}+a_3^{q^{2}+q})=1,\\
        a_0^{q}a_3^{q^{2}}+a_1(a_2^{q^{2}}+a_3^{q^{2}+q})=0,\\
        a_0^{q^{2}}+a_3^{q^{2}}a_1^{q}+a_2(a_2^{q^{2}}+a_3^{q^{2}+q})=0,\\
        a_1^{q^{2}}+a_3^{q^{2}}a_2^{q}+a_3(a_2^{q^{2}}+a_3^{q^{2}+q})=0,
        \end{array} \right.
        \]
        which, by Corollary \ref{norm}, is equivalent to
        \[
        \left\{ \begin{array}{lllllc}
        \N(a_0)=1,\\
        a_0(a_2^{q^{2}}+a_3^{q^{2}+q})=1,\\
        a_0^{q}a_3^{q^{2}}+a_1(a_2^{q^{2}}+a_3^{q^{2}+q})=0,\\
        a_0^{q^{2}}+a_3^{q^{2}}a_1^{q}+a_2(a_2^{q^{2}}+a_3^{q^{2}+q})=0,\\
        a_1^{q^{2}}+a_3^{q^{2}}a_2^{q}+a_3(a_2^{q^{2}}+a_3^{q^{2}+q})=0,
        \end{array} \right.
        \]
        thus it can be rewritten as follows
        \[
        \left\{ \begin{array}{lllllc}
        \N(a_0)=1,\\
        a_2^{q^{2}}+a_3^{q^{2}+q}=\frac{1}{a_0},\\
        a_0^{q}a_3^{q^{2}}+\frac{a_1}{a_0}=0,\\
        a_0^{q^{2}}+a_3^{q^{2}}a_1^{q}+\frac{a_2}{a_0}=0,\\
        a_1^{q^{2}}+a_3^{q^{2}}a_2^{q}+\frac{a_3}{a_0}=0,
        \end{array} \right.
        \]
        and hence
        \[ \left\{ \begin{array}{lllllc}
        \N(a_0)=1,\\
        a_0(a_2^{q^{2}}+a_3^{q^{2}+q})=1,\\
        a_1=-a_0^{q+1}a_3^{q^{2}},\\
        a_2=-a_0^{q^{2}+1}-a_3^{q^{2}}a_1^{q}a_0,\\
        a_3=-a_1^{q^{2}}a_0-a_3^{q^{2}}a_2^{q}a_0,
        \end{array} \right.\]
        i.e.
        \[ \left\{ \begin{array}{lllllc}
        \N(a_0)=1,\\
        a_0(-a_0^{q^{4}+q^{2}}+a_3^{q^{5}+q^{4}}a_0^{q^{4}+q^{3}+q^{2}}+a_3^{q^{2}+q})=1,\\
        a_1=-a_0^{q+1}a_3^{q^{2}},\\
        a_2=-a_0^{q^{2}+1}+a_3^{q^{3}+q^{2}}a_0^{q^{2}+q+1},\\
        a_3=a_3^{q^{4}}a_0^{q^{3}+q^{2}+1}+a_3^{q^{2}}a_0^{q^{3}+q+1}-a_0^{q^{3}+q^{2}+q+1}a_3^{q^{4}+q^{3}+q^{2}}.
        \end{array} \right.\]
\end{itemize}
 \newpage

\noindent Bence Csajb\'ok\\
MTA--ELTE Geometric and Algebraic Combinatorics Research Group\\
ELTE E\"otv\"os Lor\'and University, Budapest, Hungary\\
Department of Geometry\\
1117 Budapest, P\'azm\'any P.\ stny.\ 1/C, Hungary\\
{{\em csajbokb@cs.elte.hu}}

\bigskip

\noindent Giuseppe Marino\\
Dipartimento di Matematica e Fisica,\\
Universit\`a degli Studi della Campania ``Luigi Vanvitelli'',\\
Viale Lincoln 5, I-\,81100 Caserta, Italy\\

\noindent Dipartimento di Matematica e Applicazioni ``Renato Caccioppoli"\\
Università degli Studi di Napoli ``Federico II",\\
Via Cintia, Monte S.Angelo I-80126 Napoli, Italy\\
{{\em giuseppe.marino@unicampania.it}, {\em giuseppe.marino@unina.it}}

\bigskip

\noindent Olga Polverino and Ferdinando Zullo\\
Dipartimento di Matematica e Fisica,\\
Universit\`a degli Studi della Campania ``Luigi Vanvitelli'',\\
I--\,81100 Caserta, Italy\\
{{\em olga.polverino@unicampania.it}, \\{\em ferdinando.zullo@unicampania.it}}

\end{document}